\def\A{\mathcal{A}}
\def\B{\mathcal{B}}
\def\Z{\mathbb{Z}}
\def\Q{\mathbb{Q}}
\def\R{\mathbb{R}}
\def\a{\alpha}
\def\b{\beta}
\def\g{\gamma}
\def\d{\delta}
\def\l{\lambda}
\def\L{\Lambda}
\def\half{\tfrac{1}{2}}
\def\GL{{\rm GL}}
\newcommand*{\defeq}{\stackrel{\text{def}}{=}}
\newcommand{\pMatrix}[4]{\left(\begin{matrix}#1 & #2 \\ #3 & #4\end{matrix}\right)}
\renewcommand{\pmatrix}[4]{\left(\begin{smallmatrix}#1 & #2 \\ #3 & #4\end{smallmatrix}\right)}
\def \ep {\varepsilon}
\newtheorem*{theorem1*}{Dirichlet Approximation Theorem}
\newtheorem*{theorem2*}{Minkowski Approximation Theorem}
\newtheorem*{theorem3*}{Minkowski's First Convex Body Theorem}
\newtheorem*{theorem*}{Theorem}
\newtheorem{theorem}{Theorem}
\newtheorem{lemma}{Lemma}
\newtheorem{corollary}{Corollary}
\newtheorem*{corollary*}{Corollary}
\theoremstyle{remark}
\newtheorem*{remark}{Remark}
\newtheorem*{remarks}{Remarks}
\newtheorem*{examples}{Examples}
\numberwithin{equation}{section}
\numberwithin{lemma}{section}
\title{The Minkowski chain and Diophantine approximation}
\date{\today}
\author{Nickolas Andersen}
\address{Brigham Young University,
Department Of Mathematics, Provo, UT 84602} \email{nick@math.byu.edu}
\author{William Duke}
\address{UCLA Mathematics Department,
Box 951555, Los Angeles, CA 90095-1555} \email{wdduke@ucla.edu}
\thanks{Supported by NSF grant DMS 1701638.}
\begin{document}

\begin{abstract}
The  Hurwitz chain  gives a sequence of pairs of Farey approximations to an irrational real number.  Minkowski gave  a criterion for a number to be algebraic by using a certain generalization of the Hurwitz chain.
 We  apply Minkowski's generalization (the Minkowski chain)  to give  criteria for 
a real linear form  to be either badly approximable or singular. 
We also give a variant of Dirichlet's approximation theorem for a real linear form that 
produces a whole basis of approximating integral vectors rather than a single one.
This result  holds if and only if the form is badly approximable. 
The proofs rely on properties of successive minima and reduced bases  of lattices.
\end{abstract}

\maketitle

\section{Introduction }\label{intro}
Every irrational $\a\in \R$ has a unique expansion as an infinite regular continued fraction
\begin{equation*}
\a=a_0+\frac{1}{a_1+}\;\frac{1}{a_2+}\;\frac{1}{a_3+}\cdots
\end{equation*}
where $a_j$ are integers called the partial quotients of $\a$ with  $a_j>0$ for $j \geq 1.$
A striking result of elementary number theory, going back to Euler and Lagrange,  is that $\a$ is algebraic of degree two over $\Q$  if and only if this expansion is eventually periodic. 

More generally,  suppose that $\a\in \R$   is such that  $\{\a^n,\a^{n-1},\dots,\a,1\}$ are  linearly independent over $\Q$.
The $n=1$ case above leads naturally to the following problem.  Find an algorithm, like the regular continued fraction,  which
provides a criterion for $\a$  to be algebraic of degree $\ell=n+1$ over $\Q$.
Since   Jacobi \cite{Jac}, most investigations of multi-dimensional generalizations of continued fractions,
as applied to algebraic numbers,  have concentrated on periodicity.
This approach has had only limited success. 

However, already in 1899 Minkowski \cite{Mink1.2}\footnote{A translation (with additions) of this paper into English is given in Vol.~1 Chap.~IX of \cite{Han}.} found such an algorithm that produces a sequence of nonsingular $\ell \times \ell $ integral matrices, the  Minkowski chain, which characterizes algebraic  $\a$  not through periodicity 
but rather  a certain finiteness condition.
The Minkowski chain generalizes the Hurwitz chain, itself a refinement of  the regular continued fraction.
 In a speech appearing as the preface to Minkowski's collected papers,\footnote{See p.~XV.~of Vol.~I of the Gesammelte Abhandlungen.} Hilbert said  that ``Der Minkowskische Algorithmus ist nicht ganz einfach...." One goal of our paper is to  revive interest in the Minkowski chain and its applications. 
In particular, Minkowsi's criterion for an algebraic number  has not received  the attention we think it deserves.    

Our main goal is to apply the Minkowski chain to characterize badly approximable and singular real linear forms in several variables. 
We also give a variant of Dirichlet's approximation theorem for a linear form that produces a whole basis of approximating integral vectors
and that holds precisely for badly approximable forms.

In the next section we recall the definitions of the Hurwitz and Minkowski chains, formulate their relationships to each other and to the regular continued fraction  and state Minkowski's criterion. We also give some illustrative examples.
Then in \S \ref{appl} we state our results on Diophantine approximations by linear forms.
The remainder of the paper contains the proofs.
We  have tried to make the presentation as self-contained as is feasible and we provide proofs of all numbered theorems, corollaries  and lemmas.

\section{The Minkowski chain}\label{minko}
Suppose that $\a\in (0,1)$ is irrational. A natural way to approximate $\a$ by rational numbers, while controlling the size of the of the denominators,  is to use Farey fractions. For $m\in \Z^+$ let $\mathcal{F}_m$ be the $m^{th}$ Farey set, which consists of all rational numbers in $[0,1]$ in increasing order  whose denominators are at most $m$. Thus
\[
\mathcal{F}_1=\{\tfrac{0}{1},\tfrac{1}{1}\},\,\mathcal{F}_2=\{\tfrac{0}{1},\tfrac{1}{2},\tfrac{1}{1}\},\;\mathcal{F}_3=\{\tfrac{0}{1},\tfrac{1}{3},\tfrac{1}{2},\tfrac{2}{3},\tfrac{1}{1}\}, \;\mathcal{F}_4=\{\tfrac{0}{1},\tfrac{1}{4},\tfrac{1}{3},\tfrac{1}{2},\tfrac{2}{3},\tfrac{3}{4},\tfrac{1}{1}\},\dots.
\]
 For a fixed  $m$ let $(\frac{p}{q},\frac{p'}{q'}) $ be the unique pair of successive Farey fractions in $\mathcal{F}_m$ with  $\frac{p}{q}<\a<\frac{p'}{q'}$. 
  After $m=2$ the pair of surrounding fractions might not change as $m$ increases to $m+1$, 
but when it does one fraction will remain and the new one will be $\frac{p+p'}{q+q'}$.
This process was studied in some detail by Hurwitz \cite{Hur} in 1894
and the sequence of (distinct) Farey pairs is called the Hurwitz chain for $\a$ by Philippon in \cite{Phi}.

We can encode the Hurwitz chain of an irrational $\a\in (0,1)$ by a unique infinite word in the letters $R$ and $L$.   We label a pair with $R$ if within the pair the old fraction is to the right of the new one and $L$ if it is to the left. We label the first pair  $(\tfrac{0}{1},\;\tfrac{1}{1})$ with $L$ and the next with $R$ if it  is $ (\tfrac{1}{2},\tfrac{1}{1})$ and with $L$ if it is $(\tfrac{0}{1},\tfrac{1}{2})$.

For example,  the Hurwitz chain for $\a=\frac{1}{2}(-1+\sqrt{5})$  begins
 \begin{equation}\label{hc}
(\tfrac{0}{1},\;\tfrac{1}{1}),\; (\tfrac{1}{2},\tfrac{1}{1}),\;  (\tfrac{1}{2},\tfrac{2}{3}),\; (\tfrac{3}{5},\tfrac{2}{3}),\; (\tfrac{3}{5},\tfrac{5}{8}), \;(\tfrac{8}{13},\tfrac{5}{8}),\dots
 \end{equation}
with corresponding word $LRLRLR\dots.$ 

The word corresponding to 
the Hurwitz chain for $\a\in (0,1)$ determines the partial quotients $a_j$ in 
 the regular continued fraction
\begin{equation}\label{scf2}
\a=\frac{1}{a_1+}\;\frac{1}{a_2+}\;\frac{1}{a_3+}\cdots.
\end{equation}
It follows from standard properties of the convergents of the continued fraction that
 $a_j$  is given by the number of successive $L$'s or $R$'s in the $j^{th}$ block of the word. 
 Thus the partial quotients for  $\a=\frac{1}{2}(-1+\sqrt{5})$ are $a_j=1$ for all $j$.
Clearly $\a$ is quadratic over $\Q$   if and only if  the word associated to the Hurwitz chain for $\a$  is eventually periodic.

Minkowski discovered that to detect algebraic numbers of degree greater than two it is better to abandon  periodicity. His algorithm is readily described.  We give it in a slightly generalized form that we need later.   Suppose that $(\a_1,\a_2,\dots,\a_n)\in \R^n$
is such that $\{\a_1,\dots,\a_n,1\}$
 are linearly independent over $\Q$.
Set \[\ell=n+1.\] Define for any real matrix $A=(a_{i,j})$ the norm $\|A\|_\infty=\max(|a_{i,j}|).$
For $\ell\geq 2$ and
 $m\in \Z^+$ let $\mathcal{A}_m$ consist of all non-singular integral $\ell\times \ell$ matrices $A$ with $\|A\|_\infty\leq m.$
Write
\begin{equation}\label{be}
A(\a_1,\dots,\a_n,1)^\top=(\b_1,\b_2,\dots,\b_{\ell})^\top.
\end{equation}
Let  $\A_{m,1}\subset \mathcal{A}_m$ be those $A\in \A_m$ that minimize  $\|A(\a_1,\dots,\a_n,1)^\top\|_\infty$
and for which the minimum is $|\b_1|$.
 This fixes the first row of $A$ by the linear independence assumption, provided we make some sign convention, for example  that the first non-zero entry in the first row is positive.
 Next let $\A_{m,2}\subset \mathcal{A}_{m,1}$ be those $A\in \mathcal{A}_{m,1}$ for which $|\b_2|$ gives the minimal value  thereby with the corresponding convention fixing the second row of $A$. 
Continue this process of defining rows of $A$.
Thus for each $m$ we have defined $A_m$ uniquely.
The matrices $A_m$ need not change as $m$ goes to $m+1$.
Let $B_k=A_{m_k}$, where $k=1,2,\dots$,  define the subsequence of distinct matrices starting with $B_1=A_1.$
The sequence $\{B_1,B_2,\dots\}$ of matrices is what we will call the {\it Minkowski chain} for $(\a_1,\dots,\a_n)$.
 
When $n=1$ and $\a\in (0,1)$  the Minkowski chain corresponds to the Hurwitz chain for $\a$.
More precisely, we have the following result.
\begin{theorem}\label{mchc}
Let the $k^{th}$ matrix in the Minkowski chain for an irrational $\a\in (0,1)$ be 
\[
B_k=\pMatrix{q}{-p}{q'}{-p'}.
\]
Then the $k^{th}$ pair  in the Hurwitz chain
 is either  $(\frac{p}{q},\frac{p'}{q'}) $ or $(\frac{p'}{q'},\frac{p}{q}) $.
\end{theorem}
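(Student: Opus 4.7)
The plan is to identify $A_m$ for each fixed $m$ directly and to show it is built from the Farey pair $p/q<\alpha<p'/q'$ in $\mathcal{F}_m$. Since $p'q-pq'=1$, the integer matrix $F=\pMatrix{q}{-p}{q'}{-p'}$ has $\det F=-1$, so its rows form a $\Z$-basis of $\Z^2$. Hence every integer row $(a,b)\in\Z^2$ has a unique representation $(a,b)=k(q,-p)+l(q',-p')$, and in these coordinates
\[
\beta := a\alpha+b = k\beta_1^\star+l\beta_2^\star,\qquad \beta_1^\star := q\alpha-p>0,\quad \beta_2^\star := q'\alpha-p'<0.
\]
Set $M^\star=\max(\beta_1^\star,|\beta_2^\star|)=\|F(\alpha,1)^\top\|_\infty$; the claim will be that $A_m$ equals $F$ up to sign conventions and the ordering of rows.

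The key step is showing that $M^\star$ is not beaten inside $\A_m$. I will classify $(k,l)\in\Z^2$ whose row lies in the box $\|(a,b)\|_\infty\le m$ and satisfies $|\beta|<M^\star$, using the Farey inequality $q+q'>m$ (which holds because the mediant of $p/q$ and $p'/q'$ is not in $\mathcal{F}_m$). If $kl>0$ then $|a|=|kq+lq'|\ge q+q'>m$, so the row leaves the box; if $kl<0$ then $k\beta_1^\star$ and $l\beta_2^\star$ carry the same sign, giving $|\beta|\ge\beta_1^\star+|\beta_2^\star|>M^\star$; and the only remaining cases $k=0$ and $l=0$ give integer multiples of $(q',-p')$ and of $(q,-p)$ respectively. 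Consequently every row in the box with $|\beta|<M^\star$ lies on a single line through the origin --- the axis belonging to whichever Farey neighbor carries the smaller $|\beta_i^\star|$ --- so no two such rows are linearly independent.

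The remainder is bookkeeping. Since two independent rows cannot both satisfy $|\beta_i|<M^\star$, we obtain $\min_{A\in\A_m}\|A(\alpha,1)^\top\|_\infty=M^\star$, and any minimizer has rows $\pm(q,-p)$ and $\pm(q',-p')$ in some order. Imposing $|\beta_1|=M^\star$ makes the first row whichever Farey neighbor has the larger $|\beta_i^\star|$; then minimizing $|\beta_2|$ assigns the other neighbor to the second row; and the paper's sign convention eliminates the remaining $\pm$ ambiguity. This gives $A_m=B_k$ in the claimed shape, its two rows producing the Farey pair in one of two possible orders. Because $A_m$ and the Farey pair change at exactly the same values of $m$ --- precisely when a new mediant joins $\mathcal{F}_m$ --- the two chains are indexed in parallel, so $B_k$ corresponds to the $k$-th Hurwitz pair. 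The main obstacle is the classification in the middle paragraph; once the Farey bound $q+q'>m$ is invoked to kill the same-sign case, the rest is essentially routine.
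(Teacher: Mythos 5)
Your proof is correct but takes a genuinely different route from the paper's. The paper proceeds by induction on $k$: assuming the rows of $B_k$ give the $k^{th}$ Farey pair, it invokes Lemma \ref{lf} to show $B_{k+1}$ must retain the row with $|q_k\alpha-p_k|$ minimal and replace the other with the mediant row $(q_k+q'_k,\,-(p_k+p'_k))$. You instead pin down $A_m$ directly for each fixed $m$ by expanding an arbitrary candidate row $(a,b)$ in the Farey $\Z$-basis $\{(q,-p),(q',-p')\}$ (valid since $p'q-pq'=1$) and eliminating $kl\neq 0$: same-sign coordinates leave the box via $q+q'>m$, while opposite-sign coordinates force $|\beta|\geq\beta_1^\star+|\beta_2^\star|>M^\star$. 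This is non-inductive and exhibits the correspondence between $A_m$ and the Farey pair in $\mathcal{F}_m$ at every level at once, at the cost of the basis-decomposition setup; the paper's induction is a bit shorter but hides the global picture in the single mediant step.

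Two remarks on details. Your reading of the algorithm makes $|\beta_1|$ the \emph{larger} of the two $|\beta_i|$ (the sup-norm being minimized), but the rest of the paper --- the chain $0<|\alpha_{k,1}|<\cdots<1$ with $\alpha_{k,i}=\beta_i/\beta_\ell$, Lemma \ref{minu}, and the paper's own proof, which calls the first row ``the one with $|q_k\alpha-p_k|$ minimal'' --- all take $|\beta_1|$ to be the \emph{smallest} $|\xi(w)|$ over $\|w\|_\infty\leq m$. Your $A_m$ therefore has its two rows transposed relative to the paper's; this happens to be harmless here since the theorem is symmetric in the pair, but you should flag the convention. Also, the sentence ``any minimizer has rows $\pm(q,-p)$ and $\pm(q',-p')$'' overstates the classification: the row on the short axis could be a higher multiple $k(q,-p)$ with $kq\leq m$ and $k|\beta_1^\star|<M^\star$ (for instance $m=3$, $\alpha$ near $0$, Farey pair $(\tfrac{0}{1},\tfrac{1}{3})$, $k=2$ or $3$). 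Your subsequent minimization of $|\beta_2|$ does force $k=1$, so this is a phrasing slip rather than a gap.
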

An immediate corollary is the following fact which, as far as we know, need not hold in general for $n>1$.
\begin{corollary}
When $n=1$ we have that $|\!\det B_k|=1$ for all $k$.
\end{corollary}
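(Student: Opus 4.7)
The plan is to combine Theorem \ref{mchc} with the classical identity for consecutive Farey fractions. By Theorem \ref{mchc}, when $n=1$ the matrix $B_k$ has the form $\pMatrix{q}{-p}{q'}{-p'}$ where $(p/q,\,p'/q')$ or $(p'/q',\,p/q)$ is the $k^{\text{th}}$ pair in the Hurwitz chain, and in particular these are successive Farey fractions in some $\mathcal{F}_m$. Computing, $|\det B_k| = |pq'-p'q|$, so the corollary reduces to the claim that for any two successive Farey fractions $p/q < p'/q'$ one has $p'q - pq' = 1$. The order in which the rows appear does not matter, since swapping rows only changes the sign of the determinant.

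To establish the Farey identity I would argue by induction on $k$, using the description of the Hurwitz chain recalled in \S\ref{minko}. The base case is the initial pair $(0/1,\,1/1)$, for which $1\cdot 1 - 0\cdot 1 = 1$. For the inductive step, the next pair in the chain is obtained from $(p/q,\,p'/q')$ by keeping one fraction and replacing the other by the mediant $(p+p')/(q+q')$; the resulting pair is either $((p+p')/(q+q'),\,p'/q')$ or $(p/q,\,(p+p')/(q+q'))$. A direct calculation shows that in both cases the new cross-product equals $p'q - pq'$, so the identity is preserved at every step. This completes the argument. There is no real obstacle here — the one thing to be careful about is the sign convention and the possibility of swapped row order in Theorem \ref{mchc}, but since we only need $|\det B_k|$, both ambiguities are harmless.
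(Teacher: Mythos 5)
Your proposal is correct and matches the paper's reasoning: the paper regards the corollary as immediate from Theorem \ref{mchc} together with the well-known identity $p'q - pq' = 1$ for successive Farey fractions, which is exactly the argument you give (with the standard mediant-induction proof of that identity spelled out).
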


Let any $\ell\times \ell $ matrix $B=(b_{i,j})$ act on an $n$-tuple $(x_1,x_2,\dots,x_n)$ projectively as a linear fractional map:
\[
B(x_1,\dots, x_n)=\Big(\tfrac{\sum_{j=1}^{n} b_{1,j} x_j+b_{1,\ell}}{\sum_{j=1}^{n} b_{\ell,j} x_j+b_{\ell,\ell}}, \dots,\tfrac{\sum_{j=1}^{n} b_{n,j} x_j+b_{n,\ell}}{\sum_{j=1}^{n} b_{\ell,j} x_j+b_{\ell,\ell}}\Big).
\]
For each $k\in \Z^+$ set
\begin{equation}\label{alph}
B_k(\a_1,\dots,\a_n)=(\a_{k,1},\dots,\a_{k,n}),
\end{equation}
where $B_k$ is the $k^{th}$ matrix in the Minkowski chain for $(\a_1,\dots,\a_n)$.
Clearly we have that \[0<|\a_{k,1}|<|\a_{k,2}|<\cdots<|\a_{k,n}|<1.\]
We are mostly interested in properties of the sequence $\{(\a_{k,1},\dots,\a_{k,n})\}_{k\geq 1}$ of $n$-tuples
attached to  $(\a_1,\dots,\a_n).$
 Minkowski realized that it is the finiteness of the set of  $n$-tuples $B_k(\a^n,\dots,\a)$, rather than periodicity determined by the chain, which  characterizes algebraic $\a$ of degree $\ell.$

\begin{theorem}[Minkowski \cite{Mink1.2}]\label{mac}
Suppose that $\a\in \R$ and that $\{\a^n,\a^{n-1},\dots,\a,1\}$ are linearly independent over $\Q$. 
 Then $\a$ is algebraic of degree $\ell=n+1$ over $\Q$ 
   if and only if 
the sequence $\{B_k(\a^n,\a^{n-1},\dots,\a)\}_{k\geq 1}$ contains only finitely many different $n$-tuples. 
\end{theorem}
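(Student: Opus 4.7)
Write $\theta = (\alpha^n, \alpha^{n-1}, \dots, \alpha, 1)^\top$ and $\theta^* = (\alpha^n, \dots, \alpha)$, so that $B_k\theta = (\beta_{k,1}, \dots, \beta_{k,\ell})^\top$ and $B_k(\theta^*) = (\beta_{k,1}/\beta_{k,\ell}, \dots, \beta_{k,n}/\beta_{k,\ell})$. I would prove the two implications separately.

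\emph{Finiteness $\Rightarrow \alpha$ algebraic of degree $\ell$.} My plan is to extract two distinct chain matrices whose ratio has $\theta$ as an eigenvector and deduce algebraicity from that. By pigeonhole there exist indices $k_1 \neq k_2$ with $B_{k_1}(\theta^*) = B_{k_2}(\theta^*)$, which in projective form reads $M\theta = \lambda\theta$ where $M \defeq B_{k_2}^{-1}B_{k_1} \in \GL_\ell(\Q)$, $M \neq I$, and $\lambda \in \R^{\times}$. The first substep is to rule out $\lambda \in \Q$: since the entries of $\theta$ are $\Q$-linearly independent, rational $\lambda$ forces $M = \lambda I$ and hence $B_{k_1} = \lambda B_{k_2}$; the sign convention on first rows excludes $\lambda < 0$, the case $\lambda = 1$ contradicts $k_1 \neq k_2$, and whenever $|\lambda|\neq 1$ the matrix with smaller $\|\cdot\|_\infty$ beats the other in the chain's minimization of $|\beta_{k,1}|$ at the larger of the two scales. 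So $\lambda$ is algebraic irrational with $d \defeq [\Q(\lambda):\Q] \leq \ell$ (as a root of $\det(xI - M)$). The last coordinate of $M\theta = \lambda\theta$ reads $\lambda = \sum_j M_{\ell,j}\alpha^{\ell-j}$, so $\lambda \in \Q[\alpha]$. Conversely, letting $W \defeq \ker(M - \lambda I) \subseteq \Q(\lambda)^\ell$, of $\Q(\lambda)$-dimension $d'$, we have $\theta \in W \otimes_{\Q(\lambda)}\R$; expressing $\theta$ in a $\Q(\lambda)$-basis of $W$ and using the last $d'$ coordinates of $\theta = (\alpha^n, \dots, \alpha, 1)^\top$ to solve for the coefficients shows that $1, \alpha, \dots, \alpha^{\ell-1}$ all lie in a $\Q(\lambda)$-vector space of dimension at most $d'$, giving $[\Q(\alpha, \lambda):\Q(\lambda)] \leq d'$. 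Since $dd' \leq \ell$ (the algebraic multiplicity of $\lambda$ times its degree does not exceed $\ell$), we obtain $[\Q(\alpha):\Q] \leq dd' \leq \ell$; combined with the linear-independence hypothesis this forces $[\Q(\alpha):\Q] = \ell$.

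\emph{$\alpha$ algebraic of degree $\ell \Rightarrow$ finiteness.} Let $\sigma_1 = \mathrm{id}, \sigma_2, \dots, \sigma_\ell$ be the $\Q$-embeddings of $\Q(\alpha)$ into $\mathbb{C}$. Each $\beta_{k,i} = p_{k,i}(\alpha) \in \Z[\alpha]$ has $|\sigma_j(\beta_{k,i})| \leq Cm_k$ since $\|p_{k,i}\|_{\mathrm{coef}} \leq m_k$, and the positive integer $|N(\beta_{k,i})| = \prod_j |\sigma_j(\beta_{k,i})|$ is therefore uniformly bounded in $k$; Liouville's inequality moreover pins $|\beta_{k,1}| \asymp m_k^{-n}$. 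Dirichlet's unit theorem then gives a unit group in $\Z[\alpha]$ of positive rank (since $\alpha$ is real of degree $\geq 2$), whence there are only finitely many elements of $\Z[\alpha]$ of any bounded norm modulo units. Because the projective $n$-tuple $B_k(\theta^*)$ is invariant under multiplying \emph{all} $\beta_{k,i}$ by a common scalar, I would then apply Minkowski's reduction theory to the lattice image of $\Z[\alpha]$ in $\R^\ell$ via $(\sigma_j)_j$ to conclude that at each scale $m_k$ the full tuple $(\beta_{k,1}, \dots, \beta_{k,\ell})$ is, up to a single common unit factor, one of finitely many possibilities, which gives only finitely many projective images $B_k(\theta^*)$.

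\emph{Main obstacle.} The first direction is pigeonhole plus the eigenvalue calculation and should be routine. The substantial work lies in the second. Although Dirichlet's theorem reduces each $\beta_{k,i}$ individually to a bounded fundamental region, the projective $n$-tuple $B_k(\theta^*)$ is invariant only under a \emph{common} unit rescaling of its coordinates, not independent ones. One must therefore use the fact that the $\beta_{k,i}$ are not arbitrary but the joint output of Minkowski-reduction at scale $m_k$, so that their collective log-profile sits in a bounded region of the hyperplane $\sum_j \log|\sigma_j(\cdot)| = \log|N|$ modulo the log-unit lattice; promoting individual reductions to a simultaneous one is the crux and is precisely the content of the reduction theory of orders in number fields that Minkowski himself developed.
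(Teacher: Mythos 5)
Your forward direction (finiteness $\Rightarrow$ algebraic of degree $\ell$) is the same pigeonhole argument the paper uses, but the paper's execution is more elementary: from $B_k^{-1}B_{k'}(\a^n,\dots,\a,1)^\top=\theta(\a^n,\dots,\a,1)^\top$ it multiplies the $i$-th row-equation by $\a$ and subtracts the $(i+1)$-st, thereby eliminating the eigenvalue $\theta$ and producing, for each $i=1,\dots,n$, a rational polynomial relation of degree at most $\ell$ in $\a$. If all of these vanish identically one is forced to $B_{k'}=\theta B_k$ with $|\theta|<1$, contradicting $k'>k$. This sidesteps your analysis of $[\Q(\lambda):\Q]$ and the eigenspace dimension $d'$, which is more delicate than it needs to be.

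The reverse direction contains a genuine gap, and the paper's route is quite different from the one you sketch. Your assertion that $|N_{K/\Q}(\beta_{k,i})|$ is ``therefore uniformly bounded in $k$'' does not follow from $|\sigma_j(\beta_{k,i})|\le Cm_k$ alone; that bound only gives $|N|\le(Cm_k)^\ell$, which grows. To control the norm you need the two-sided estimate $cm^{-n}<|\beta_1|<\cdots<|\beta_\ell|<Cm^{-n}$ on the identity-embedding coordinates of \emph{all} the $\beta_i$, not just $\beta_1$, which is all that Liouville together with Dirichlet gives you directly. That estimate is precisely the paper's Lemma~\ref{baib}, whose proof applies Minkowski's theorem on successive minima to the scaled sup-norm $G_m$; it is the real work of this direction and is absent from your outline. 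Once Lemma~\ref{baib} is available, the paper makes no use of Dirichlet's unit theorem or of Minkowski reduction of orders. It instead sets $\g_{k,i}=b^\ell N_{K/\Q}(\b_\ell)\a_{k,i}$ (an algebraic integer, since the $b\b_j$ are) and verifies directly that every Galois conjugate $|\g_{k,i}^{(j)}|$ is bounded independently of $m_k$: writing $\g_{k,i}^{(j)}=b^\ell N(\b_\ell)\,\b_i^{(j)}/\b_\ell^{(j)}$, the ratio $|\b_i^{(j)}/\b_\ell^{(j)}|$ is $O(1)$ for $j=1$ by Lemma~\ref{baib}, and is $O(1)$ for $j>1$ by combining the conjugate bound $|\b_i^{(j)}|\le C_1m$ with the derived lower bound $|\b_\ell^{(j)}|\ge C_2 m$ (itself a consequence of the norm bound plus Lemma~\ref{baib}); the factor $N(\b_\ell)$ is bounded by the norm bound. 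Algebraic integers of degree $\le\ell$ with uniformly bounded conjugates form a finite set, which completes the proof. The common rescaling by $b^\ell N(\b_\ell)/\b_\ell$ is exactly the simultaneous normalization you flagged as the crux, but the paper realizes it explicitly and elementarily, not via the unit group; as written, that step together with Lemma~\ref{baib} is the missing content in your proposal.
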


Actually,  Minkowski's formulation   allows $\a$ to be complex. He also did not assume that $\{\a^n,\dots,\a,1\}$ are linearly independent
over $\Q$, but by using the algorithm with smaller $n$ we may assume this without any loss and with uniqueness of the expansion.

\begin{examples} \hfill
\begin{enumerate}[label=(\roman*)]
	\item The Minkowski chain for $\a=\frac{-1+\sqrt{5}}{2}$ is
\begin{equation*}
	B_1= \pmatrix 1{-1}1{0}, \;\;  B_2=\pmatrix 2{-1}1{-1}, \;\; B_3=\pmatrix 3{-2}2{-1},  \dots, B_k=\pmatrix {F_{k+1}}{-F_k}{F_k}{-F_{k-2}},\dots,
	\end{equation*}
	which corresponds to (\ref{hc}). Here $F_k$ is the $k^{th}$ Fibonnaci number and for each   $k$
	 \[B_k(\tfrac{-1+\sqrt{5}}{2})=\tfrac{1-\sqrt{5}}{2}.\] 
	\item Let $\theta=2\cos\left(\frac{2\pi}{7}\right)$ so that $\Q(\theta)$ is the real cubic field of discriminant $49$, i.e.~the splitting field of $x^3+x^2-2x-1$.
The Minkowski chain for $(\theta^2,\theta)$ begins
\begin{equation*}
B_1 = \left(
\begin{smallmatrix}
 0 & 1 & -1 \\
 1 & -1 & 0 \\
 1 & -1 & -1 \\
\end{smallmatrix}
\right), \quad
B_2 = \left(
\begin{smallmatrix}
 1 & -2 & 1 \\
 2 & -1 & -2 \\
 0 & 1 & -1 \\
\end{smallmatrix}
\right), \quad
B_3 = \left(
\begin{smallmatrix}
 1 & -2 & 1 \\
 3 & -3 & -1 \\
 2 & 0 & -3 \\
\end{smallmatrix}
\right), \quad
B_4 = \left(
\begin{smallmatrix}
 1 & 2 & -4 \\
 1 & -2 & 1 \\
 3 & -3 & -1 \\
\end{smallmatrix}
\right), \ldots.
\end{equation*}
By Theorem \ref{mac} we know that the set of values $\{B_k(\theta^2,\theta)\}$ is finite.
Among the first 30 terms there are only six distinct pairs up to sign, namely
\newcommand\sdots{\makebox[1em][c]{.\hfil.\hfil.}}
\begin{gather*}
(0.15883\sdots, 0.64310\sdots), \quad (0.24698\sdots, 0.55496\sdots), \quad (0.35690\sdots, 0.44504\sdots), \\ 
(0.44504\sdots, 0.80194\sdots), \quad (0.55496\sdots, 0.69202\sdots), \quad (0.64310\sdots, 0.80194\sdots).
\end{gather*}
	\item Suppose that $\a$ is transcendental, so $\{\a^n,\dots,\a,1\}$ are linearly independent over $\Q$ for any positive integer $n$. For a fixed $n$ let  $B_k(\a^n,\dots,\a)=(\a_{k,1},\dots,\a_{k,n})$ come from the Minkowski chain for $(\a^n,\dots,\a)$ as above.
By Theorem \ref{mac} we know that \[\{(\a_{k,1},\dots,\a_{k,n})\}_{k\geq 1}\]   is an infinite set.

	Recall that $\a\in \R$ is a Liouville number if, for every positive integer $m$, there exist infinitely many relatively prime integers $p,q$ with $q>0$ such that
\[0<|\a-\tfrac{p}{q}|<q^{-m}.\] Liouville's theorem on Diophantine approximation implies that  a Liouville number $\a$ is transcendental.
If $\a$ is a Liouville number and $n \in \Z^+$ is fixed, our results stated below imply that  not only is $\{(\a_{k,1},\dots,\a_{k,n})\}_{k\geq 1}$  infinite, but also $ |\a_{k,1}|$ gets arbitrarily close to zero as $k \rightarrow \infty$.
For the Liouville constant
\begin{equation*}
\l=\sum_{m\geq 1} 10^{-m!}= 0.11000100000000000000000100\dots
\end{equation*}
and the cases $n=1,2,3$, the behavior of $|\l_{k,1}|$  is shown in Figure \ref{fig1}.

\begin{figure}
	\includegraphics[scale=0.35]{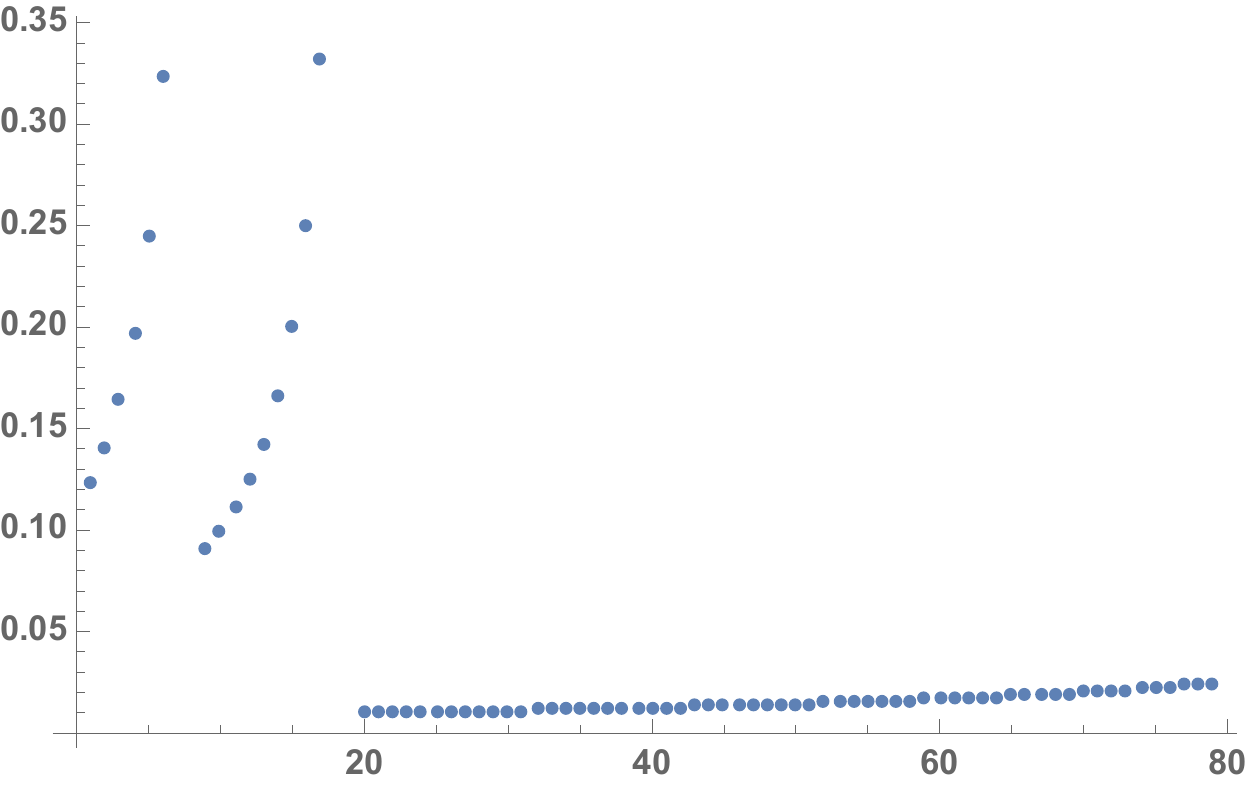}	\quad \includegraphics[scale=0.35]{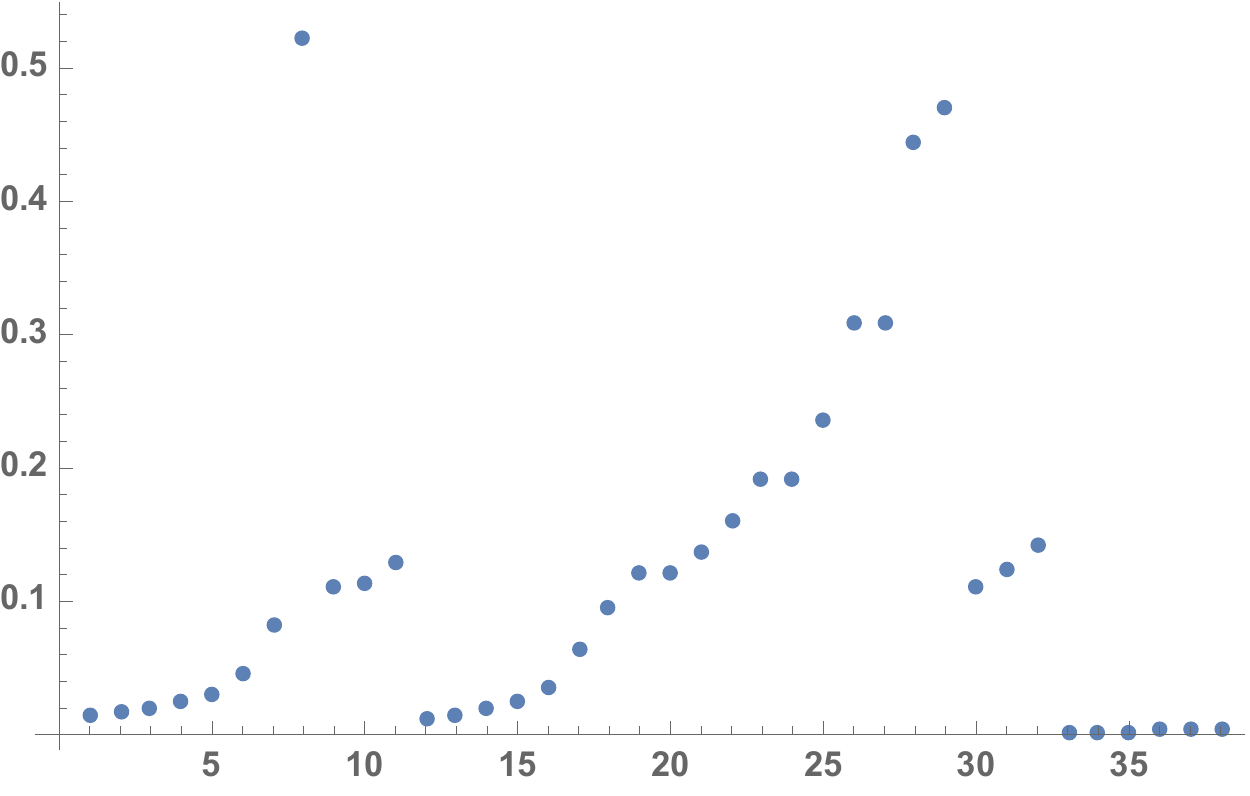}\quad \includegraphics[scale=0.35]{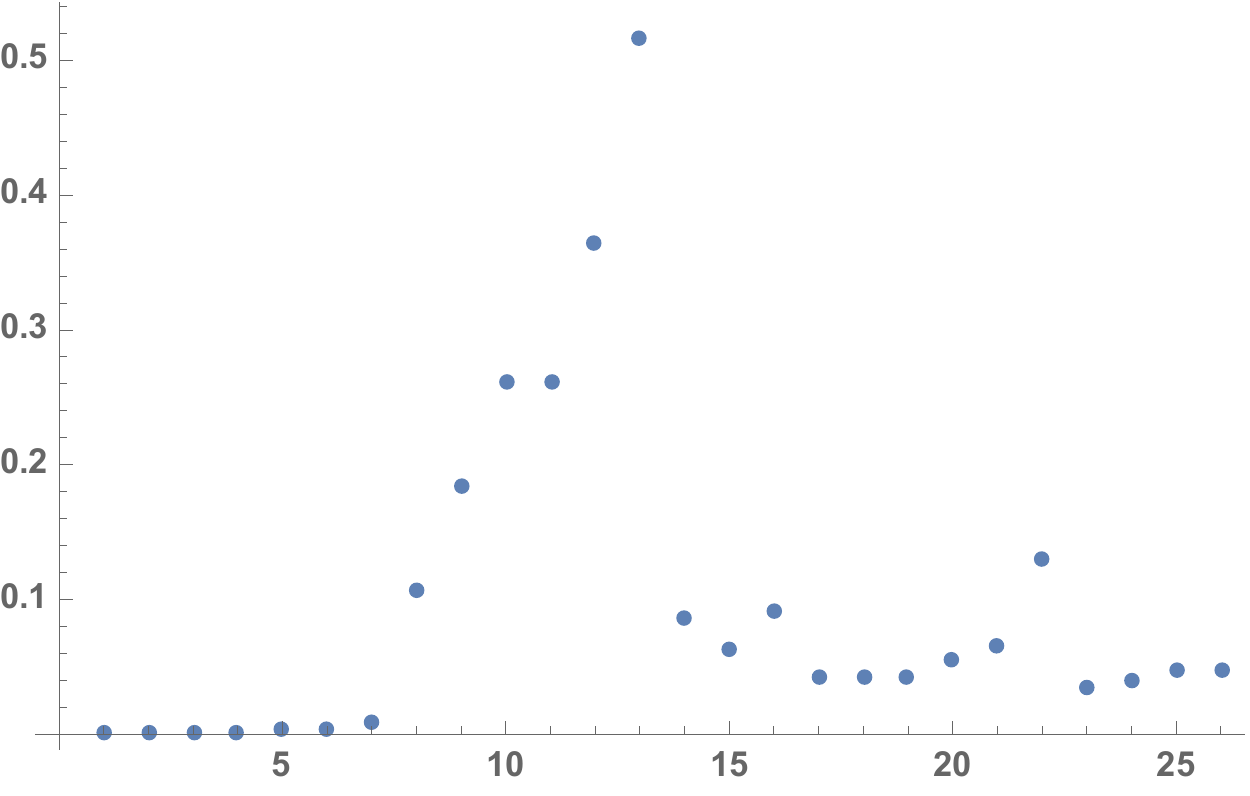}
	\caption{The sequences $|\l_{k,1}|$ for $n=1,2,3$. }
	\label{fig1}
\end{figure}
\end{enumerate}
\end{examples}

\section{Applications to Diophantine approximation}\label{appl}
Our main  object is to apply the Minkowski chain and related methods  to certain problems on Diophantine approximation by badly approximable and by singular real linear forms in two or more variables.

Associate to any $\a=(\a_1,\dots,\a_n)\in \R^n$ the  linear form\footnote{Our abuse of notation in using $\a$ as an $n$-tuple of real numbers  and as a number, depending on the context, is convenient and should not cause confusion.} \[L_\a(x)=\a_1x_1+\cdots +\a_nx_n.\]
Those  $\{\a_1,\dots,\a_n,1\}$ that give a basis over $\Q$ for a real number field
have the following well-known Diophantine approximation property  \cite[Thm 4A p.~42]{Sch3}. There is a constant $c=c_\a>0$ so that for any non-zero $q=(q_1,\dots,q_n) \in \Z^n$  
 \begin{equation}\label{ba}
\|L_\a(q)\|\geq c\|q\|_\infty^{-n}.
 \end{equation}
Here $\|t\|$ denotes the distance from a real $t$ to the nearest integer. For any $\a\in \R^n$ 
if the form $L_\a(x)$  satisfies (\ref{ba}) then $L_\a$ is said to be {\it badly approximable}.  
For simplicity we shall also sometimes say that $\a$ is badly approximable.
 It is known that the set of all badly approximable $\a\in \R^n$  has Lebesgue measure zero \cite{Khi} yet has full Hausdorff dimension $n$, hence includes $\a$ for which $\{\a_1,\dots,\a_n,1\}$ does not give a $\Q$-basis for a number field  \cite{Sch1}.  For more on the history of these results see  \cite{Sch3} and its references.
 
 A natural problem presents itself; can we formulate a  criterion for a form in $n$ variables  to be badly approximable 
 using the Minkowski chain?  
\begin{theorem}\label{t3} Suppose that 
  $\{\a_1,\dots,\a_n,1\}$ are linearly independent over $\Q$ and that $\a_{k,1}$  is given by the Minkowski chain for $\a$.  Then
the form $L_\a$  is badly approximable
if and only if $ |\a_{k,1}|$  is bounded away from 0.
\end{theorem}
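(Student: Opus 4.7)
The identity $\a_{k,1}=\b_{k,1}/\b_{k,\ell}$ (from (\ref{alph}) and the linear fractional action of $B_k$) reduces the theorem to bounding the numerator and denominator separately in terms of the scale $m_k$ associated with $B_k$. Set $F(y)=L_\a(y_1,\dots,y_n)+y_\ell$ so that $\b_{k,i}=F(v_i)$ for the $i$-th row $v_i$ of $B_k$, with $|\b_{k,1}|\leq\cdots\leq|\b_{k,\ell}|$ by construction of the chain. The theorem follows once we establish, up to multiplicative constants depending only on $n$: (i) $|\b_{k,1}|\geq c m_k^{-n}$ uniformly iff $L_\a$ is badly approximable; (ii) $|\b_{k,\ell}|\geq c_n m_k^{-n}$ for every $\a$; and (iii) $|\b_{k,\ell}|\leq C' m_k^{-n}$ under BA.

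Bound (i) is essentially the definition of BA combined with Dirichlet's theorem, both phrased in terms of the minimum of $\|L_\a(q)\|$ over $q\in\Z^n\setminus\{0\}$ with $\|q\|_\infty\leq m_k$; a routine augmentation by the nearest integer converts this into a statement about $|F|$ over integer vectors admissible in $\A_{m_k}$, modulo rescaling $m_k$ by a constant to keep the optimal integer $p$ inside the box $|p|\leq m_k$. Bound (ii) is proved by subtracting $\a_j$ times column $j$ from column $\ell$ in $\det B_k$ for each $j\leq n$: the determinant is unchanged, the last column becomes $(\b_{k,1},\dots,\b_{k,\ell})^\top$, and expanding along it while bounding the $n\times n$ minors of the first $n$ columns by Hadamard's inequality gives $1\leq|\det B_k|\leq \ell\, n^{n/2}\, m_k^n\, |\b_{k,\ell}|$.

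The substantive step is bound (iii). I would apply Minkowski's second theorem on successive minima to the symmetric convex body
\[
K_m=\{x\in\R^\ell:\ |x_i|\leq m\ \text{for}\ i<\ell,\ |x_\ell|\leq m^{-n}\}
\]
(volume $2^\ell$) and the lattice $\L=T(\Z^\ell)$ of determinant $1$, where $T(y_1,\dots,y_\ell)=(y_1,\dots,y_n,F(y))$. Minkowski yields $\prod_i\l_i(K_m)\asymp 1$. Meanwhile BA forces each $\l_i\geq c^{1/\ell}$: a lattice point $T(y)\in\l K_m$ with $(y_1,\dots,y_n)\ne 0$ satisfies $c(\l m)^{-n}\leq\|L_\a(y_1,\dots,y_n)\|\leq|F(y)|\leq\l/m^n$, forcing $\l^\ell\geq c$. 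Combining the two, all $\l_i$ lie in a bounded range, producing $\ell$ linearly independent integer vectors with $|F|\leq C m_k^{-n}$ that, after a harmless rescaling of $m$, satisfy the Minkowski-chain constraint $\|\cdot\|_\infty\leq m_k$; since the chain picks $v_i$ minimizing $|F|$ among vectors independent of $v_1,\dots,v_{i-1}$, the bound $|\b_{k,\ell}|\leq C'm_k^{-n}$ follows.

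Combining (i)--(iii) gives $|\a_{k,1}|\geq c/C'>0$ under BA. For the converse, the failure of BA produces a sequence $q^{(j)}\in\Z^n\setminus\{0\}$ with $\|L_\a(q^{(j)})\|\,\|q^{(j)}\|_\infty^n\to 0$; the augmented integer vectors of sup-norm $m_j\asymp\|q^{(j)}\|_\infty$ are admissible in the chain with $|F|=o(m_j^{-n})$, so $|\b_{k_j,1}|=o(m_j^{-n})$ while $|\b_{k_j,\ell}|\geq c_n m_j^{-n}$ by (ii), forcing $|\a_{k_j,1}|\to 0$. The main obstacle will be bound (iii): identifying the body and lattice, tracking constants through the rescaling that converts successive-minima vectors into Minkowski-chain-admissible ones (note that $\|\cdot\|_\infty\leq m_k$ in the chain constrains the last coordinate of $v$ as well, unlike the body $K_m$), and confirming that Minkowski's second theorem really produces enough linearly independent vectors to force the full basis of $B_k$ to inherit the $O(m_k^{-n})$ bound on $|F|$.
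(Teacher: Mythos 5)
Your proof is correct in substance and reaches the theorem by a genuinely different route from the paper. The common core is the sandwich $c\,m^{-n}<|\b_1|<\cdots<|\b_\ell|<C\,m^{-n}$ under bad approximability: your bound (i) matches the paper's inequality \eqref{lower} directly. The differences are in how you obtain the other two bounds. For the lower bound $|\b_\ell|\gg m^{-n}$ (your (ii)), you use a determinant argument (column operations to put $\b$ in the last column, then Hadamard) which is clean, elementary and holds unconditionally; the paper instead derives the analogous bound via reduced bases, Lemma~\ref{lred}(i), and the first inequality of the First Finiteness Theorem applied to the scaled lattice $\L_t(\a)$. For the upper bound $|\b_\ell|\ll m^{-n}$ under BA (your (iii)), both you and the paper invoke Minkowski's second theorem, but you work with the fixed unipotent lattice $T(\Z^\ell)$ and the scaled box $K_m$, whereas Lemma~\ref{baib} works with $\Z^\ell$ and a norm $G_m$ that \emph{bootstraps $|\b_\ell|$ into its own definition} and relies on the lower bound $\mu_\ell\geq m$ established in Lemma~\ref{min}. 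Your choice of body is simpler and the bookkeeping is tighter. Your converse direction also bypasses the paper's route through the dynamical criterion (Lemma~\ref{l1}) and reduced bases: you argue directly from the failure of BA together with your unconditional bound (ii), which is shorter. The trade-off is that the paper's reduced-basis machinery is reused for Theorems~\ref{t4} and~\ref{t1}, so investing in it once pays off there.

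The one gap you correctly flag at the end is easy to close and you should write it out: the successive-minima vectors you produce only control $\|(y_1,\dots,y_n)\|_\infty$ and $|F(y)|$, not $|y_\ell|$. But since $|y_\ell|=|F(y)-L_\a(y_1,\dots,y_n)|\leq |F(y)|+(|\a_1|+\cdots+|\a_n|)\|(y_1,\dots,y_n)\|_\infty\ll m$, the vectors are automatically admissible in the Minkowski algorithm at scale $m'=\lceil c'm\rceil$ for a constant $c'$ depending only on $\a$. To deduce $|\b_\ell(M)|\ll M^{-n}$ for an arbitrary scale $M$, run the argument at $m=\lfloor M/c'\rfloor$, obtaining $\ell$ independent vectors admissible at scale $M$ with $|F|\ll M^{-n}$, and then invoke the comparison principle (Lemma~\ref{minu} in the paper, which you use implicitly when you say ``since the chain picks $v_i$ minimizing $|F|$ among vectors independent of $v_1,\dots,v_{i-1}$''): any $\ell$ independent admissible vectors, ordered by $|F|$, majorize the chain's $|\b_1|,\dots,|\b_\ell|$ termwise. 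That comparison deserves a sentence of proof (it is a pigeonhole argument on linear independence) rather than an appeal to ``follows.''
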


Theorem \ref{t3} generalizes a well-known characterization of badly approximable numbers in case $n=1.$
\begin{corollary}\label{cor}
An irrational $\a\in \R $ is badly approximable if and only if the partial quotients in its regular continued fraction expansion 
 are bounded.
\end{corollary}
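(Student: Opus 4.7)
The plan is to derive the corollary as the $n=1$ specialization of Theorem \ref{t3}. Since neither the set of partial quotients $\{a_j\}_{j \ge 1}$ nor the property of being badly approximable is affected by replacing $\alpha$ by $\alpha - \lfloor \alpha\rfloor$, I may assume $\alpha \in (0,1)$. Theorem \ref{t3} then reduces the corollary to the equivalence: $|\alpha_{k,1}|$ is bounded away from $0$ as $k \to \infty$ if and only if the sequence $(a_j)_{j\ge 1}$ is bounded.

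To set up a formula for $|\alpha_{k,1}|$, I invoke Theorem \ref{mchc} and write $B_k = \pmatrix{q}{-p}{q'}{-p'}$, where $\{p/q,\, p'/q'\}$ is the $k$th Hurwitz pair. Arranging labels so that $p/q < \alpha < p'/q'$, the Farey-neighbor identity $qp' - pq' = 1$ combined with the signs $q\alpha - p > 0 > q'\alpha - p'$ yields
\[
q'|q\alpha - p| + q|q'\alpha - p'| = 1.
\]
By the minimality convention defining the Minkowski chain, the first row of $B_k$ is whichever of the two rows produces the smaller $|\beta_i|$, and so $|\alpha_{k,1}| = \min(|q\alpha - p|,|q'\alpha - p'|)/\max(|q\alpha - p|,|q'\alpha - p'|)$.

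The remaining task is to track $|\alpha_{k,1}|$ through a Hurwitz block of length $a_{j+1}$. Within such a block the pairs are precisely the semi-convergent pairs $\bigl((p_{j-1} + m p_j)/(q_{j-1} + m q_j),\, p_j/q_j\bigr)$ for $m = 1,\ldots,a_{j+1}$, interpolating between consecutive convergents $(p_{j-1}/q_{j-1}, p_j/q_j)$ and $(p_{j+1}/q_{j+1}, p_j/q_j)$. Substituting these into the displayed identity gives an exact formula for $|\alpha_{k,1}|$ at each pair, and a short calculation shows the minimum over the block is comparable to $1/a_{j+1}$. Taking the sup over $j$ yields the desired equivalence.

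The main obstacle will be careful bookkeeping at the transition between blocks, where the two rows of $B_k$ exchange roles as the "better approximation" flips from one fraction to the other. This is manageable via the classical identity $q_j\alpha - p_j = (-1)^j/(q_j\tilde\alpha_{j+1} + q_{j-1})$, where $\tilde\alpha_{j+1} = a_{j+1} + 1/(a_{j+2} + \cdots)$; once this is in hand, the estimate $|\alpha_{k,1}| \asymp 1/a_{j+1}$ at the relevant $k$ falls out, and the corollary follows.
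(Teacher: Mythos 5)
Your argument has the same architecture as the paper's: apply Theorem \ref{t3} and then establish (this is exactly the paper's Lemma \ref{pqe}) that $|\alpha_{k,1}|$ is bounded away from zero if and only if the partial quotients are bounded, using Theorem \ref{mchc} to tie the Minkowski chain to the Hurwitz chain. Where you differ is in how you prove that intermediate equivalence. The paper exploits the matrix transition $B_k = MB_{k-b}$ (where $M$ is $L^b$, $R^b$, or one of these precomposed with a row swap, and $b$ is the relevant partial-quotient increment coming from the block decomposition of $k$), which yields a one-step M\"obius recursion such as $\alpha_{k,1} = 1/(1 + b\alpha_{k-b,1})$, from which the boundedness equivalence reads off directly. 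You instead write $|\alpha_{k,1}| = \min(|q\alpha-p|,|q'\alpha-p'|)/\max(|q\alpha-p|,|q'\alpha-p'|)$, derive the Farey-pair identity $q'|q\alpha-p| + q|q'\alpha-p'| = 1$, and estimate semi-convergent errors block by block. This is sound, and your identity and formula for $\alpha_{k,1}$ are correct; but it is more laborious and, as you yourself flag, demands care at block boundaries. In particular, at the last pair of the $j$-th block the ratio equals $|q_{j+1}\alpha-p_{j+1}|/|q_j\alpha-p_j|$, which is governed by $a_{j+2}$ rather than $a_{j+1}$, so your stated estimate that $|\alpha_{k,1}|$ is comparable to $1/a_{j+1}$ is not uniform across the block. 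That index shift is harmless for the sup over $j$, so your conclusion still holds, but it is one of the loose ends your sketch leaves open. If you later want a tight, short write-up, switching to the $B_k = MB_{k-b}$ recursion of Lemma \ref{pqe} avoids the bookkeeping entirely.
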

This follows from Theorem \ref{t3} and Lemma \ref{pqe} proved below.
 For the standard direct  proof see  \cite[Thm 5F p.~22]{Sch3}.

 An important property of any badly approximable $L_\a$, discovered  by Davenport and Schmidt,
is that Dirichlet's approximation theorem  can be improved for it in the following sense.
\begin{theorem*}[Davenport-Schmidt \cite{DS1}]
If $L_\a$ is badly approximable then
there exists a constant $c<1$ having the following property:
for every sufficiently large integer $Q$   there are non-zero $q\in \Z^n$  with
$\|q\|_\infty\leq Q$ for which
\begin{equation}\label{dir1} 
\| L_\a(q)\|\leq c\, Q^{-n}.
\end{equation}
\end{theorem*}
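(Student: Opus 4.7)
The plan is to translate the Davenport--Schmidt conclusion into a statement about the first successive minimum of a standard lattice/body pair, use bad approximability together with Minkowski's second theorem to get uniform control of all successive minima, and then extract the strict improvement over Dirichlet from the basis data produced by the Minkowski chain.

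First I would introduce the lattice $\L_\a = \{(q, L_\a(q) - p) : q \in \Z^n,\ p \in \Z\} \subset \R^{n+1}$ of covolume $1$, together with the symmetric convex body $C_Q = \{(x, y) \in \R^{n+1} : \|x\|_\infty \leq Q,\ |y| \leq Q^{-n}\}$ of volume $2^{n+1}$. Writing $\l_1(Q) \leq \cdots \leq \l_{n+1}(Q)$ for the successive minima of $C_Q$ relative to $\L_\a$, the theorem is equivalent to $\l_1(Q) \leq c$ for some $c<1$ and all sufficiently large $Q$. If a nonzero $v = (q, L_\a(q) - p) \in \L_\a$ lies in $\l C_Q$, then $\|q\|_\infty \leq \l Q$ and $\|L_\a(q)\| \leq \l Q^{-n}$; combining with bad approximability $\|L_\a(q)\| \geq c_0 \|q\|_\infty^{-n}$ yields $c_0 \leq \l^{n+1}$, so every $\l_i(Q) \geq \d := c_0^{1/(n+1)}$. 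Minkowski's second theorem then forces $\prod_i \l_i(Q) \leq 1$, and hence each $\l_i(Q) \in [\d, \d^{-n}]$ uniformly in $Q$.

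To improve $\l_1 \leq 1$ to $\l_1 \leq c < 1$, I would invoke the Minkowski chain. By Theorem~\ref{t3} the ratios $|\b_1^{(k)} / \b_j^{(k)}| \geq \d$ are bounded away from zero, so the $n+1$ rows of $B_k$ produce $n+1$ linearly independent lattice vectors $(q_j^{(k)}, \b_j^{(k)}) \in \L_\a$ with $\|q_j^{(k)}\|_\infty \leq m_k$ and all the $|\b_j^{(k)}|$ comparable, of common size $\asymp m_k^{-n}$ (upper bound from Dirichlet at scale $m_k$, lower bound from the Laplace expansion of $|\det B_k| \geq 1$ along the last column). These rows give a reduced basis of $\L_\a$ at each chain scale. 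For $Q \in [m_k, m_{k+1})$ one then builds the required improved approximation from an integer combination of the rows of $B_k$ (and possibly rows of $B_{k+1}$) so that both $\|q_v\|_\infty \leq Q$ and $\|L_\a(q_v)\| \leq c Q^{-n}$ simultaneously improve Dirichlet.

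The main obstacle is producing the constant $c < 1$ uniformly in $Q$. The first row of $B_k$ alone only gives Dirichlet's non-strict bound when $Q$ is near $m_{k+1}$, so strict improvement requires either bounding the chain-gap ratio $m_{k+1}/m_k$ from above --- for $n=1$ this is Corollary~\ref{cor} (bounded partial quotients) --- or arguing via Mahler compactness that the orbit $\{g_Q \L_\a\}$ under the diagonal flow $g_Q = \mathrm{diag}(Q^{-1},\ldots,Q^{-1}, Q^n)$ has compact closure in the space of unimodular lattices (which follows from the lower bound $\l_i \geq \d$ established above) and then showing that the first minimum of the unit box is strictly less than $1$ on this closure. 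Ruling out $\l_1 = 1$ on the orbit closure is the delicate point that ultimately yields the uniform $c < 1$.
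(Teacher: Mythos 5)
The paper does not prove this theorem; it is cited to Davenport and Schmidt \cite{DS1}, and the authors explicitly remark that its proof ``relies on a conjecture of Minkowski, proven by Haj\'os, which says that in any lattice tiling of space by cubes, there are two cubes meeting face to face.'' So there is no in-paper argument to compare against, and you should be cautious about ``re-proving'' a result whose known proof requires a nontrivial structure theorem that you never invoke.

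Your proposal contains a genuine gap, and in fact you name it yourself at the end. After the Mahler-compactness/bad-approximability setup, what you have established is that all successive minima $\l_i(Q)$ lie in $[\d,\d^{-n}]$, in particular $\l_1(Q)\leq 1$ by Dirichlet/Minkowski. The \emph{entire} content of the theorem is the strict improvement $\l_1(Q)\leq c<1$ for all large $Q$, and that is exactly the step you leave unresolved (``Ruling out $\l_1=1$ on the orbit closure is the delicate point''). Mahler compactness alone does not rule it out: a priori there could be a sequence $Q_j\to\infty$ with $\l_1(Q_j)\to 1$, converging to a limit lattice $\L'$ whose only lattice points in the open cube $(-1,1)^\ell$ is the origin while some lie on the boundary. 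To exclude this is precisely where Davenport and Schmidt invoke Haj\'os's theorem: such a $\L'$ must contain a standard basis vector $\pm e_i$, and that structure, pulled back along the diagonal flow and combined with the specific form of $\L_\a$, yields a contradiction. Without Haj\'os (or some substitute), the argument stalls at $c\leq 1$, i.e.\ at Dirichlet's theorem, which is not what is being proved.

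Two further technical problems. First, you claim the rows of $B_k$ ``give a reduced basis of $\L_\a$,'' but the Minkowski chain matrices need not lie in $\GL(\ell,\Z)$ when $n>1$ --- the paper's Corollary after Theorem~\ref{mchc} establishes $|\!\det B_k|=1$ only for $n=1$, and Remark~(iii) only reports numerical evidence for higher $n$. So the rows of $B_k$ are linearly independent lattice vectors but possibly not a basis, and you cannot directly build arbitrary approximants from integer combinations of them. Second, the lower bound $|\b_j|\gg m_k^{-n}$ does not follow from ``Laplace expansion of $|\!\det B_k|\geq 1$ along the last column''; the paper obtains two-sided bounds on the $|\b_j|$ via Minkowski's second theorem and the norm $G_m$ (Lemmas~\ref{min} and~\ref{baib}), not by a determinant expansion, and the expansion argument does not bound the individual $|\b_j|$ from below.
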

Of course, the improvement is in the fact that $c<1$ rather than $c=1.$
The proof of this result relies on a conjecture of Minkowski, proven by Haj\'os \cite{Haj}, which says that in any lattice tiling of space by cubes, there are two cubes  meeting face to face.

  The theorem of Davenport and Schmidt implies the following  result.
\begin{theorem}\label{liou}
Suppose that $\a$ is a Liouville number.  Then $L_{(\a^n,\dots,\a)}$ is not badly approximable for any $n.$
\end{theorem}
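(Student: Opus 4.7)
The plan is to use the Liouville property to exhibit, for each positive constant $c$, a nonzero integer vector $\mathbf{v} \in \Z^n$ with $\|L_{(\alpha^n,\ldots,\alpha)}(\mathbf{v})\| < c\|\mathbf{v}\|_\infty^{-n}$, thereby directly falsifying the defining inequality (\ref{ba}) of bad approximability.

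For each positive integer $M$, I would use the Liouville hypothesis to select coprime integers $p, Q$ with $Q > 0$ and $|\alpha Q - p| < Q^{1-M}$. The candidate vector is $\mathbf{v} = (Q^n, 0, \ldots, 0)$; it has $\|\mathbf{v}\|_\infty = Q^n$ and $L_{(\alpha^n, \ldots, \alpha)}(\mathbf{v}) = \alpha^n Q^n$. The key identity I would exploit is the factorization
\[
\alpha^n Q^n - p^n = (\alpha Q - p) \sum_{j=0}^{n-1} (\alpha Q)^{n-1-j} p^j.
\]
Since $p = \alpha Q + O(Q^{1-M})$, so that $|p| \leq (|\alpha| + 1)Q$ for $Q$ large, the sum is $O(Q^{n-1})$, giving $\|\alpha^n Q^n\| \leq |\alpha^n Q^n - p^n| \leq C_{n,\alpha}\, Q^{n-M}$. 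Multiplying through by $\|\mathbf{v}\|_\infty^n = Q^{n^2}$ yields
\[
\|\mathbf{v}\|_\infty^n \, \|L_{(\alpha^n,\ldots,\alpha)}(\mathbf{v})\| \leq C_{n,\alpha} \, Q^{n^2 + n - M}.
\]

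To finish, I would invoke the fact that Liouville's definition provides infinitely many such $(p, Q)$ for each fixed $M$, so $Q$ may be taken arbitrarily large. For any prescribed $c > 0$, choosing $M > n^2 + n$ and then $Q$ sufficiently large forces the right-hand side below $c$, violating (\ref{ba}) for that constant. Hence $L_{(\alpha^n,\ldots,\alpha)}$ cannot be badly approximable for any $n$. The connection with Davenport--Schmidt is contrapositive in spirit: their theorem asserts only a uniform improvement of Dirichlet's bound by a fixed $c < 1$ for any badly approximable form, whereas the Liouville construction produces improvements by an unbounded factor, vastly exceeding anything compatible with bad approximation. The sole computational point requiring care is the polynomial bound on the second factor of the factorization, which is elementary; beyond that the argument is simply a threading of Liouville's estimate through the definition of bad approximability, and I foresee no genuine obstacle.
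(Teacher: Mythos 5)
Your proof is correct and takes a genuinely different route from the paper. The paper's argument does not attack the defining inequality (\ref{ba}) directly: instead it shows that Dirichlet's theorem \emph{cannot} be improved for $L_{(\a^n,\dots,\a)}$ — namely, taking $Q = q-1$ with $p/q$ a Liouville approximant and expanding $\a^j$ about $(p/q)^j$, the integrality of the numerator $x_1p^n+x_2p^{n-1}q+\cdots+x_npq^{n-1}-yq^n$ forces $\|L_{(\a^n,\dots,\a)}(x)\| \geq (1-\ep)Q^{-n}$ for all $0<\|x\|_\infty\leq Q$ — and then invokes the Davenport--Schmidt theorem (which itself rests on Haj\'os's theorem on cube tilings) in the contrapositive. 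Your argument bypasses Davenport--Schmidt entirely and is both more elementary and self-contained: it directly exhibits, for every prescribed $c>0$, a nonzero $\mathbf{v}\in\Z^n$ with $\|L(\mathbf{v})\|\|\mathbf{v}\|_\infty^n<c$, and it in fact shows that the quantity $\|L(\mathbf{v})\|\|\mathbf{v}\|_\infty^n$ can be driven to $0$, which is a stronger statement than bare failure of bad approximability. One remark: your choice $\mathbf{v}=(Q^n,0,\dots,0)$ works but is more complicated than necessary. The simpler vector $\mathbf{v}=(0,\dots,0,Q)$ gives $L_{(\a^n,\dots,\a)}(\mathbf{v}) = Q\a$, so $\|L(\mathbf{v})\| \leq |Q\a - p| < Q^{1-M}$ and $\|\mathbf{v}\|_\infty^{-n} = Q^{-n}$ immediately; the ratio is then $\leq Q^{n+1-M}$, which vanishes for $M>n+1$, without any polynomial factorization. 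This streamlining does not affect the correctness of your version.
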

Note that this gives a strengthening of Liouville's result that a Liouville number is transcendental.
The  claim  that for a Liouville number $ |\a_{k,1}|$ gets arbitrarily close to zero as $k \rightarrow \infty$, which was made  in  example (iii) from the previous section, is a  consequence of Theorems \ref{liou} and \ref{t3}.

When $n=1$ every irrational $\a$  for which Dirichlet's  theorem can be improved is badly approximable.
However,  for $n>1$ there exist $L_\a$ with $\{\a_1,\dots,\a_n,1\}$ linearly independent over $\Q$
that are not badly approximable but for which Dirichlet's theorem can be improved.  
In fact, Dirichlet's theorem can sometimes be ``infinitely improved." More precisely
say $L_\a$ (or $\a$) is  {\it singular} if
for any $\ep>0$ there is a $Q_\ep$ so that if $Q\geq Q_\ep$ there is a $q\in \Z^n$ with \[0<\|q\|_\infty \leq Q\;\;\text{ such that}\;\;\|L_\a(q)\| \leq \ep\,Q^{-n}.\]
Such forms  are clearly not badly approximable.
Starting with work of Khinchine \cite{Khi2}  it is known that singular $L_\a$ with $\{\a_1,\dots,\a_n,1\}$ linearly independent over $\Q$  exist if $n>1$, although apparently  no explicit example has been found.
It has recently been shown  that when $n>1$  the set of singular $\a\in \R^n$ has Hausdorff dimension $\frac{n^2}{n+1}$
(see \cite{Ch}, \cite{CC}, and \cite{DFSU1}). 
The Minkowski chain  also gives a criterion for $L_\a$ to be singular.
 
 \begin{theorem}\label{t4} Suppose that 
  $\{\a_1,\dots,\a_n,1\}$ are linearly independent over $\Q$.  Then
the form $L_\a$  is singular
if and only if $ |\a_{k,1}|\rightarrow 0 $  as $k \rightarrow \infty$.
\end{theorem}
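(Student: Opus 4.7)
The plan is to reformulate both conditions in terms of the successive minima of $|L|$ on $\Z^\ell\cap P_m$, where $L(v)=v_1\a_1+\dots+v_n\a_n+v_\ell$ and $P_m=\{v:\|v\|_\infty\le m\}$, then match them up. Writing $\mu_1(m)\le\dots\le\mu_\ell(m)$ for these minima, the construction of the Minkowski chain identifies $|\b_i(B_k)|=\mu_i(m_k)$, and each $\mu_i(m)$ is constant on the interval $[m_k,m_{k+1}-1]$. Hence singularity of $L_\a$ is equivalent to $m^n\mu_1(m)\to 0$ as $m\to\infty$, which in turn is equivalent to $m_{k+1}^n|\b_1(B_k)|\to 0$ as $k\to\infty$. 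Since the projective action gives $|\a_{k,1}|=|\b_1(B_k)|/|\b_\ell(B_k)|$, the theorem reduces to comparing $|\b_\ell(B_k)|$ with $m_{k+1}^{-n}$.

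An unconditional lower bound $|\b_\ell(B_k)|\ge c_1/m_k^n$ follows from a determinant argument. Replacing the last column of $B_k$ by the column $\b=B_k(\a_1,\dots,\a_n,1)^\top$ is an elementary column operation that leaves $\det B_k$ unchanged, and expanding along the new last column gives
\[
1\le|\det B_k|\le\sum_{i=1}^\ell|\b_i(B_k)|\,|D_i|\le C\,|\b_\ell(B_k)|\,m_k^n,
\]
since the cofactors $D_i$ are integers of size $O(m_k^{\ell-1})=O(m_k^n)$. The forward direction is immediate: if $L_\a$ is singular, then for any $\ep>0$ and all large $k$, $|\b_1(B_k)|\le\ep/m_k^n$, so $|\a_{k,1}|=|\b_1|/|\b_\ell|\le\ep/c_1\to 0$.

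For the converse I would argue contrapositively. Suppose $L_\a$ is not singular; then there exist $c>0$ and a subsequence $k_j\to\infty$ along which $(m_{k_j+1}-1)^n|\b_1(B_{k_j})|\ge c$, since within each interval $[m_k,m_{k+1}-1]$ the product $m^n\mu_1(m)=m^n|\b_1(B_k)|$ is maximized at the right endpoint. Setting $N_j:=m_{k_j+1}-1$, we have $\mu_i(N_j)=|\b_i(B_{k_j})|$ for all $i$. Apply Minkowski's second theorem to the convex body $K_j=\{\|v\|_\infty\le N_j,\,|L(v)|\le 1/N_j^n\}$, whose volume is bounded above and below by absolute constants. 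Its successive minima $\l_1\le\dots\le\l_\ell$ satisfy $\prod\l_i\asymp 1$, the non-singular hypothesis gives $\l_1\ge c'>0$, and the product bound then forces $\l_\ell\le C'$; this yields $\ell$ linearly independent integer vectors of norm $O(N_j)$ with $|L|$-value $O(1/N_j^n)$. A reduced-basis argument (replacing each vector by an appropriate integer combination of itself and the shortest vector) brings them inside $P_{N_j}$ with $|L|$-value still $O(1/N_j^n)$, so $|\b_\ell(B_{k_j})|=\mu_\ell(N_j)\le C''/N_j^n$. Combined with $|\b_1(B_{k_j})|\ge c/N_j^n$, this gives $|\a_{k_j,1}|\ge c/C''>0$ along the subsequence, contradicting $|\a_{k,1}|\to 0$.

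The main obstacle is the last lattice-reduction step in the converse, namely converting the $\ell$ linearly independent lattice vectors that Minkowski's second theorem places inside the dilated box $C'\,P_{N_j}$ into vectors inside $P_{N_j}$ itself, while losing at most a constant factor in the $|L|$-values. This is essentially the content of the paper's basis version of Dirichlet's theorem, and it is where the reduced-basis machinery developed in the preceding lemmas plays its decisive role.
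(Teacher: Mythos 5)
Your forward direction is correct and cleaner than the paper's. The paper obtains the needed control on $|\b_\ell|$ by re-running the reduced-basis and First Finiteness machinery from the proof of Theorem~\ref{t3} with $t=m/\ep$; you replace this entirely by the elementary determinant bound $|\b_\ell(B_k)|\ge 1/(\ell!\,m_k^n)$ (column operation $+$ cofactor expansion on $B_k$). That step is correct and unconditional, and combined with the reformulation of singularity as $m^n\mu_1(m)\to 0$ it gives the implication ``singular $\Rightarrow |\a_{k,1}|\to 0$'' immediately.

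The converse has a genuine gap, and it is exactly the step you flag, but the proposed repair does not work. Minkowski's Second Theorem on $K_j=\{\|v\|_\infty\le N_j,\ |L(v)|\le N_j^{-n}\}$ produces $\ell$ linearly independent integer vectors $w_1,\dots,w_\ell$ lying in $C'K_j$, i.e.\ with $\|w_i\|_\infty\le C'N_j$ and $|L(w_i)|\le C'/N_j^n$. To conclude $\mu_\ell(N_j)=|\b_\ell(B_{k_j})|\le C''/N_j^n$ you need such vectors with $\|w_i\|_\infty\le N_j$ \emph{exactly}, since the Minkowski chain minimizes over the box $\|r\|_\infty\le m$ with $m$ fixed. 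The suggested reduction---``replacing each vector by an integer combination of itself and the shortest vector''---fails because the shortest vector $w_1$ may itself satisfy $\|w_1\|_\infty\asymp C'N_j$, so adding integer multiples of $w_1$ cannot bring $\|w_i\|_\infty$ down to $N_j$ while keeping $|L|$ controlled. More structurally, the two quantities you are trying to match scale differently: $\l_\ell(K_j)$ dilates both the box and the $|L|$-slab together, whereas $\mu_\ell(m)$ holds the box fixed at radius $m$ and shrinks only $|L|$; a bound on $\l_\ell(K_j)$ gives information about $\mu_\ell(C'N_j)$, which is $\le\mu_\ell(N_j)$ and hence the wrong direction. The reduced-basis lemmas (Lemmas~\ref{lred}, \ref{fft}, used in Theorems~\ref{t3} and \ref{t1}) do not close this; they bound reduced-basis norms by successive minima for a \emph{single} gauge and do not let you trade a dilation of the box against the $|L|$-component. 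The paper handles this direction differently, by passing to the norm $G_m$ of Lemma~\ref{min}---whose unit ball already encodes $|\b_\ell|$ in its shape---and invoking Lemma~\ref{min} to get $\mu_\ell\ge m$; if you want to salvage your approach you would need an analogue of that step rather than a post-hoc reduction of the vectors produced by Minkowski's theorem.

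Two smaller points. First, the identification $|\b_i(B_k)|=\mu_i(m_k)$ (greedy minima equal true successive minima on $P_{m_k}$) is correct but not automatic; it is the content of Lemma~\ref{minu} and should be cited. Second, the equivalence ``singular $\iff m^n\mu_1(m)\to 0$'' is correct, but note that $\mu_1(m)$ also imposes $|q_\ell|\le m$ on the last coordinate, which the definition of singularity does not; this is harmless because $|q_\ell|\le|\xi(r)|+|L_\a(q)|=O_\a(m)$ for the minimizing $r$, but it deserves a sentence.
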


Our final result gives  a different kind of improvement (in some sense) of Dirichlet's theorem
that holds  for badly approximable forms and only those. This theorem gives a whole basis of integral vectors, rather than just one, 
for which a bound  of Dirichlet's type holds (with $c>1$ allowed).

 \begin{theorem}\label{t1}
The form $L_\a$ where 
 $\a=(\a_1,\dots,\a_n)\in \R^n$ is badly approximable if and only if there exists a constant $c>0$ so that for any $Q\in \Z^+$ there is an $A\in \GL(\ell,\Z)$ with $\|A\|_\infty < Q$
and
\[
\| A(\a_1,\dots,\a_n,1)^\top\|_\infty < c \,Q^{-n}.
\]
\end{theorem}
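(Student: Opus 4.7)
The plan is to recast the statement in terms of successive minima and reduced bases of an auxiliary family of lattices $\L_Q\subset\R^\ell$ defined, for each $Q>0$, as the image of $\Z^\ell$ under the linear map
\[ T_Q\colon (q,r)\longmapsto\bigl(q/Q,\;Q^n(L_\a(q)+r)\bigr), \]
with $q\in\R^n$ and $r\in\R$. Since $|\det T_Q|=1$, the lattice $\L_Q$ has covolume $1$. An integer matrix $A\in\GL(\ell,\Z)$ whose rows are the pairs $(q_i,r_i)\in\Z^n\times\Z$ is in bijection with a $\Z$-basis $\{T_Q(q_i,r_i)\}$ of $\L_Q$, and the inequalities $\|A\|_\infty<Q$ and $\|A(\a_1,\dots,\a_n,1)^\top\|_\infty<cQ^{-n}$ say precisely that each basis vector has sup-norm controlled by a constant, except for the $|r_i|<Q$ portion of $\|A\|_\infty<Q$. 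That portion is absorbed by the identity $|r_i|\le|L_\a(q_i)|+|L_\a(q_i)+r_i|$ after rescaling $Q$ by a constant depending only on $\a$. Thus the theorem is equivalent to the assertion that $\L_Q$ admits a $\Z$-basis whose vectors have $\|\cdot\|_\infty$-norm bounded uniformly in $Q$.

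A direct transference, using $\|T_Q(q,r)\|_\infty\ge\max(\|q\|_\infty/Q,\,Q^n\|L_\a(q)\|)$ and comparing the two sides when $Q\asymp\|q\|_\infty$, shows that $L_\a$ is badly approximable if and only if the first successive minimum $\lambda_1(\L_Q,\|\cdot\|_\infty)$ stays bounded below by a positive constant uniformly for $Q\ge 1$. In the forward direction, assume $\lambda_1(\L_Q)\ge c_1>0$. Minkowski's second theorem, applied in the sup-norm (unit ball of volume $2^\ell$, covolume $1$), yields $\lambda_1\cdots\lambda_\ell\le 1$, hence $\lambda_\ell(\L_Q)\le c_1^{-(\ell-1)}$; every successive minimum is uniformly bounded above. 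I then invoke the classical lemma of Mahler asserting that any lattice in $\R^\ell$ admits a $\Z$-basis $w_1,\dots,w_\ell$ with $\|w_i\|_\infty\le c_\ell\lambda_i$, and pull back through $T_Q^{-1}$ to obtain the required matrix $A$.

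For the converse, a matrix $A$ with the stated bounds supplies $\ell$ linearly independent vectors of $\L_Q$ of sup-norm at most a constant, so $\lambda_\ell(\L_Q)$ is uniformly bounded above. The lower bound $\lambda_1\cdots\lambda_\ell\ge 1/\ell!$ in Minkowski's second theorem then forces $\lambda_1(\L_Q)\ge 1/(\ell!\,\lambda_\ell^{\ell-1})$ to be bounded below, and the transference returns that $L_\a$ is badly approximable. The main obstacle I foresee is the reduced-basis lemma used in the forward direction: because for $\ell\ge 5$ a lattice need not possess a basis realizing all $\lambda_i$ simultaneously, one must appeal to the quantitative Mahler bound $\|w_i\|_\infty\ll_\ell\lambda_i$. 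Since the abstract signals the paper's reliance on ``properties of successive minima and reduced bases of lattices,'' I expect such a lemma to be established earlier in the proofs section; the remaining bookkeeping relating $\|A\|_\infty$ to the intrinsic sup-norm on $\L_Q$ is routine.
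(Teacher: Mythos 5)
Your proposal is correct and follows essentially the same route as the paper: the lattice $\L_t(\a)$, the transference of bad approximability to a uniform lower bound on $\lambda_1$ (the paper's Lemma~\ref{l1}(i)), and the comparison between reduced bases and successive minima (the paper's Lemma~\ref{lred} together with the First Finiteness Theorem, which jointly play the role of your ``Mahler lemma''). The one small difference is that your converse argues directly (a matrix $A$ bounds $\lambda_\ell$, hence by the lower half of Minkowski's second theorem $\lambda_1$ is bounded below) rather than via the paper's contrapositive through a reduced basis; this is a modest streamlining, not a genuinely different method.
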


This has as an immediate consequence the following result.
\begin{corollary}
Suppose that $\{\a_1,\dots,\a_n,1\}$ are linearly independent over $\Q$.
If $L_\a$ is badly approximable then there is a $c>0$ so that there are infinitely many $A\in \GL(\ell,\Z)$ with
\[
\|A(\a_1,\dots,\a_n,1)^\top\|_\infty < c \|A\|_\infty^{-n}.
\]
\end{corollary}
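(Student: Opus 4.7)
The plan is to extract the corollary directly from Theorem \ref{t1}, using the linear independence hypothesis only at the end to prevent the matrices produced by that theorem from collapsing into a finite family.

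First I would apply Theorem \ref{t1}: for every $Q\in\Z^+$ there exists $A_Q\in\GL(\ell,\Z)$ with $\|A_Q\|_\infty<Q$ and $\|A_Q(\a_1,\dots,\a_n,1)^\top\|_\infty<c\,Q^{-n}$. Since $\|A_Q\|_\infty<Q$ gives $\|A_Q\|_\infty^{-n}>Q^{-n}$, each such matrix automatically satisfies
\[
\|A_Q(\a_1,\dots,\a_n,1)^\top\|_\infty < c\, Q^{-n} < c\,\|A_Q\|_\infty^{-n},
\]
which is exactly the bound asserted by the corollary. The work thus reduces to showing that infinitely many distinct matrices occur among $\{A_Q\}_{Q\geq 1}$.

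For this I would argue by contradiction. Suppose only finitely many matrices $A^{(1)},\dots,A^{(r)}$ appear. Each $A^{(i)}\in\GL(\ell,\Z)$ has all rows nonzero (else $\det A^{(i)}=0$), so each entry of $A^{(i)}(\a_1,\dots,\a_n,1)^\top$ is a nontrivial $\Z$-linear combination of $\a_1,\dots,\a_n,1$; by the $\Q$-linear independence of $\{1,\a_1,\dots,\a_n\}$, every such combination is nonzero. Hence $\|A^{(i)}(\a_1,\dots,\a_n,1)^\top\|_\infty>0$ for each $i$, and
\[
\delta \;\defeq\; \min_{1\leq i\leq r}\|A^{(i)}(\a_1,\dots,\a_n,1)^\top\|_\infty \;>\; 0.
\]
On the other hand $\|A_Q(\a_1,\dots,\a_n,1)^\top\|_\infty<cQ^{-n}\to 0$ as $Q\to\infty$, which eventually forces $\|A_Q(\a_1,\dots,\a_n,1)^\top\|_\infty<\delta$, a contradiction. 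Therefore infinitely many distinct $A_Q$ occur, and each satisfies the desired inequality.

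There is essentially no technical obstacle here: the corollary is a short consequence of Theorem \ref{t1}, and the only point requiring care is the observation that the linear independence hypothesis is exactly what prevents the approximating matrices from repeating forever.
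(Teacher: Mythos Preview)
Your proof is correct and follows exactly the approach the paper has in mind: the paper simply declares the corollary an ``immediate consequence'' of Theorem~\ref{t1} and gives no further details, so your write-up is in fact more explicit than the original. One small simplification: you do not need the linear-independence hypothesis to conclude $\|A^{(i)}(\a_1,\dots,\a_n,1)^\top\|_\infty>0$, since $A^{(i)}$ is invertible and $(\a_1,\dots,\a_n,1)^\top\neq 0$ already forces $A^{(i)}(\a_1,\dots,\a_n,1)^\top\neq 0$.
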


\begin{remarks} \hfill
\begin{enumerate}[label=(\roman*)] 
\item Theorems \ref{t3} and \ref{t4} differ  substantially from the dynamical criteria for bad approximability and singularity given (more generally for systems of forms) by Dani \cite{Da}. Roughly speaking, he showed that badly approximable systems of forms correspond to certain bounded trajectories in the space of unimodular lattices while singular systems correspond to 
divergent trajectories. In fact,  a  version of  these criteria in the case of a single form  is one ingredient in our proofs of Theorems \ref{t3} and \ref{t4} (see Lemma \ref{l1}).

\item It would be interesting to find generalizations of Theorems \ref{mac} through \ref{t1}  that apply to systems of forms.
As we mentioned, Minkowski already obtained Theorem \ref{mac} for certain forms with complex coefficients
and so it is natural to consider generalizations of the other results for them as well.

\item In all cases that we have checked numerically, each matrix in the Minkowski chain has been in $\GL(\ell,\Z)$. 
 \end{enumerate}
 \end{remarks}
 
In the next section we prove Theorem  \ref{mchc}. 
Then in \S\ref{mp} we prove  Theorem \ref{mac}.  We  provide in \S \ref{redu} background results in reduction theory. In \S \ref{pf} we give the proofs of Theorems \ref{t3},  \ref{liou}  and \ref{t4} and we prove Theorem \ref{t1}  in \S \ref{di}. 

\section{The Hurwitz chain}\label{add}
In this section we prove Theorem  \ref{mchc} and a lemma relating the partial quotients of $\a\in (0,1)$
to the Minkowski chain of $\a$
when $n=1$.
We require an elementary lemma about Farey fractions.
We always assume that rational fractions  are in lowest form.
  \begin{lemma}\label{lf}
Suppose that $\frac{p}{q}<\frac{p'}{q'} $ is a pair of successive fractions in $\mathcal{F}_m$ and that $\a\in (\frac{p}{q},\frac{p'}{q'}) $ is irrational.
 Then
  \begin{enumerate}[label=(\roman*)] 
\item $ |q\a-p|<|q'\a-p'| $ if and only if $ \a\in (\tfrac{p}{q},\tfrac{p+p'}{q+q'})$

\item The fraction  $\frac{p+p'}{q+q'}$ is the unique fraction with the smallest denominator greater than $m$ that is closer to 
$\a$ than at least one of $\frac{p}{q},\frac{p'}{q'}$. 

\end{enumerate}
  \end{lemma}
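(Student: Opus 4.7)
The plan is to prove (i) by a direct algebraic manipulation and (ii) by combining the standard Farey identity $qp'-pq'=1$ with the mediant inequality. The only delicate point will be the uniqueness assertion in (ii), which falls out of the equality case of the mediant bound.

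For part (i), since $\alpha\in(p/q,p'/q')$ the absolute values open up as $|q\alpha-p|=q\alpha-p$ and $|q'\alpha-p'|=p'-q'\alpha$. The comparison $|q\alpha-p|<|q'\alpha-p'|$ therefore becomes $q\alpha-p<p'-q'\alpha$, i.e.\ $(q+q')\alpha<p+p'$, which is exactly $\alpha<\frac{p+p'}{q+q'}$.

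For part (ii) I will first record that adjacency of $\frac{p}{q}<\frac{p'}{q'}$ in $\mathcal{F}_m$ forces $qp'-pq'=1$, so the mediant $\frac{p+p'}{q+q'}$ is automatically in lowest terms; its denominator $q+q'$ must exceed $m$, for otherwise it would itself appear in $\mathcal{F}_m$ between $\frac{p}{q}$ and $\frac{p'}{q'}$, violating adjacency. A fraction $a/b$ that is closer to $\alpha$ than one of the endpoints in the natural sense (lying strictly between $\alpha$ and that endpoint) must itself lie in $(p/q,p'/q')$. For such an $a/b$ in lowest form, the inequalities $aq-bp\ge 1$ and $bp'-aq'\ge 1$ give $\frac{a}{b}-\frac{p}{q}\ge\frac{1}{bq}$ and $\frac{p'}{q'}-\frac{a}{b}\ge\frac{1}{bq'}$; summing these and comparing with $\frac{p'}{q'}-\frac{p}{q}=\frac{1}{qq'}$ yields $b\ge q+q'$.

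The uniqueness claim is where I expect the (small) main fight. Equality $b=q+q'$ forces both of the previous inequalities to be tight, so $aq-bp=1$ and $bp'-aq'=1$; substituting $b=q+q'$ into the first and using the Farey identity in the form $1+pq'=qp'$ gives $a=p+p'$ at once. Finally, to verify that the mediant itself satisfies the closeness property, I invoke part (i): since $\alpha$ is irrational and lies in $(p/q,p'/q')$, it is strictly on one side of $\frac{p+p'}{q+q'}$, so the mediant sits between $\alpha$ and the far endpoint and is therefore closer to $\alpha$ than that endpoint.
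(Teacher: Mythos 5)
Your proof of part (i) is correct and actually cleaner than the paper's: you open the absolute values directly (using $\a\in(p/q,p'/q')$) and reduce the comparison to $(q+q')\a<p+p'$, obtaining a genuine biconditional in one stroke, whereas the paper argues each implication separately via distances to the mediant. That part is fine.

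Part (ii), however, has a real gap, and it is concentrated exactly where you flagged the ``natural sense'' caveat. You assert that any fraction $a/b$ that is closer to $\a$ than one of the endpoints must lie in $(p/q,p'/q')$, and you justify this by redefining ``closer to $\a$ than the endpoint $\tfrac{p}{q}$'' to mean ``lies strictly between $\a$ and $\tfrac{p}{q}$.'' That redefinition is not the absolute-distance reading, and it changes the content of the statement: under the literal reading $|\a-a/b|<|\a-p/q|$ or $|\a-a/b|<|\a-p'/q'|$, a fraction $a/b$ lying \emph{outside} the interval $(p/q,p'/q')$ can perfectly well qualify. (Concretely, if $\a$ is close to the near endpoint, a fraction a little past the far endpoint, with denominator between $m$ and $q+q'$, can be closer to $\a$ than the near endpoint.) The paper's own proof makes this explicit: after invoking the well-known fact for fractions \emph{inside} $(p/q,p'/q')$, it devotes the rest of the argument to fractions $p''/q''$ with $m<q''\le q+q'$ lying outside the interval, showing via the estimates $|\a-p''/q''|>\tfrac{1}{q(q+q')}$ and $|\a-\tfrac{p+p'}{q+q'}|<\tfrac{1}{q(q+q')}$ that the mediant beats them. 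Your argument never addresses that case; the Stern--Brocot bound $b\ge q+q'$ and the pretty uniqueness computation $a=p+p'$ you give apply only to fractions inside the interval. So the mediant/Farey identity machinery you set up is correct and handles the ``inside'' case cleanly (indeed more cleanly than citing the well-known fact), but to match the lemma as the paper uses it (in the proof of Theorem~\ref{mchc} it is applied to fractions that are merely closer to $\a$ than the far endpoint) you would need to add a direct comparison for fractions outside $(p/q,p'/q')$, as the paper does.
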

 \begin{proof}
(i)  If $\a\in (\frac{p}{q},\frac{p+p'}{q+q'})$ then $|\a-\frac{p'}{q'}|>|\frac{p+p'}{q+q'}-\frac{p'}{q'}|=\frac{1}{q'(q+q')}$
 so $|q'\a-p'|>\frac{1}{(q+q')}$.  Similarly  $|q\a-p|<\frac{1}{(q+q')}$ so $|q\a-p|<|q'\a-p'| $ in this case.
 The converse is similar using $\a\in (\frac{p+p'}{q+q'},\frac{p'}{q'})$.
 
 \bigskip
 
(ii) It is well-known (see e.g. \cite[p. 4]{Sch3}) that
 $\frac{p+p'}{q+q'}$ is the unique fraction with the smallest denominator greater than $m$ that is between $\frac{p}{q}$ and $\frac{p'}{q'}.$
Thus we need only show that $\frac{p+p'}{q+q'}$  is closer to $\a$ than any other $\frac{p''}{q''}$ with $m<q''\leq q+q'$
 and either $\frac{p''}{q''}<\frac{p}{q}$  or $\frac{p''}{q''}>\frac{p'}{q'}$. 
 
 Suppose that  $\frac{p''}{q''}<\frac{p}{q}$. If $\a>\tfrac{p+p'}{q+q'}$ we are done so assume that 
 \begin{equation}\label{mis}
 \tfrac{p}{q}<\a<\tfrac{p+p'}{q+q'}.
 \end{equation}
 Now
 \[
|\a- \tfrac{p''}{q''}|>|\tfrac{p''}{q''}-\tfrac{p}{q}| \geq \tfrac{1}{q''q} \geq  \tfrac{1}{q(q+q')} 
 \]
 while by (\ref{mis})
 \[
 |\a- \tfrac{p+p'}{q+q'}|<|\tfrac{p+p'}{q+q'}-\tfrac{p}{q}| =\tfrac{1}{q(q+q')}.
 \]
 
 The case $\frac{p''}{q''}>\frac{p'}{q'}$ is similar.
 \end{proof}
 
\subsection*{Proof of Theorem \ref{mchc}}
We want to show that if
\[
B_k=\pMatrix{q_k}{-p_k}{q_k'}{-p_k'}
\]
then the $k^{th}$ pair  in the Hurwitz chain
 is either  $(\frac{p_k}{q_k},\frac{p_k'}{q_k'}) $ or $(\frac{p_k'}{q'_k},\frac{p_k}{q_k}) $.
 
This follows by induction on $k$. It holds for $k=1.$
   Suppose it holds for some $k\geq 1.$ Thus $\frac{p_k}{q_k},\frac{p_k'}{q_k'}$ or $\frac{p'_k}{q'_k},\frac{p_k}{q_k}$ are successive Farey fractions
   in $\mathcal{F}_m$ where $m= \max(q_k,q'_k).$
   
By the definition of the Minkowski chain given around (\ref{be}) we know that 
   the first row of $B_k$   (the one with $|q_k\a-p_k|$ minimal) must appear in $B_{k+1}$ as either the first row or the second row.
  Now (i) of Lemma \ref{lf} implies that the fraction associated to the row retained is the one retained by the Hurwitz chain.
   
   Thus we must show that the new row of $B_{k+1}$, say $(q'',-p'')$, is precisely $(q_k+q_k',-(p_k+p_k'))$.
 By the definition of the Minkowski chain  $q''> m$ and certainly
 \[
 |q''\a-p''|< |q'_k\a -p_k'|.
 \]
 Thus $|\a-\frac{p''}{q''}|<|\a-\frac{p'_k}{q'_k}|$ so by (ii) of Lemma \ref{lf} we know that $q''\geq q_k+q_k'.$   Now 
  \begin{equation}\label{sum}
 |( q_k+q_k')\a-(p_k+p'_k)|=|(q_k\a-p_k)+(q'_k\a-p_k')|.
  \end{equation}
  Also, $\a-\frac{p_k}{q_k}$ and  $\a-\frac{p'_k}{q'_k}$ have different signs 
  hence so do $q_k\a-p_k$ and  $q_k'\a-p_k'$.
  By construction of $B_k$ we know that $|q_k\a-p_k|<|q_k'\a-p_k'|.$
   Therefore by (\ref{sum}) we have that
   \[
   |( q_k+q_k')\a-(p_k+p'_k)|<|q'_k\a -p_k'|.
   \]
   It follows that $(q'',-p'')=(q_k+q_k',-(p_k+p_k'))$.
   This completes the proof of Theorem \ref{mchc}.
   \qed
   
\bigskip
It is easy to give a formula for the 
$k^{th}$ pair in the Hurwitz chain for $\a\in (0,1)$ in terms of the partial quotients $a_j$  of $\a$.
For a fixed $k \in \Z^+$ write $k=a_1+\cdots +a_j+a$ where $0\leq  a<  a_{j+1}$.
Set $R=\pmatrix{1}{1}{0}{1}$ and $L=\pmatrix{1}{0}{1}{1}$ and let $A=L$ if $j$ is even and $A=R$ if $ j$ is odd. Then the $k^{th}$ pair in the Hurwitz chain for $\a\in (0,1)$ is given by $(\frac{p_k}{q_k},\frac{p_k'}{q_k'})$,
where 
\begin{equation}\label{trans}
\pmatrix{p_k'}{p_k}{q'_k}{q_k}=L^{a_1}R^{a_2}\cdots  A^{a}.
\end{equation}

Let $b=a$ if $a>0$ and $b=a_j$ otherwise.
Then by Theorem \ref{mchc} 
we have for $B_k$ from the Minkowski chain the formula
 $B_k = MB_{k-b}$, where $M$ is either $L^b$, $R^b$, $\pmatrix 0110L^b$, or $\pmatrix 0110R^b$.

The following consequence of these formulas is needed for the proof of Corollary \ref{cor}.
\begin{lemma}\label{pqe}
An equivalent criterion for the boundedness of the partial quotients of an irrational $\a\in (0,1) $  is that $|\alpha_{k,1}|$ from the Minkowski chain for $\a$ is bounded away from zero.
\end{lemma}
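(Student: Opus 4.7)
The plan is to translate the Minkowski data into Hurwitz/Farey data via Theorem~\ref{mchc}, then express $|\a_{k,1}|$ explicitly in terms of the partial quotient governing the block containing $k$, using the standard convergent recursion.

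First I would write $B_k = \pMatrix{q_k}{-p_k}{q_k'}{-p_k'}$, so that
\[
|\a_{k,1}| = \frac{|q_k\a - p_k|}{|q_k'\a - p_k'|},
\]
where by the minimality defining the Minkowski chain the first row carries the smaller value of $|q\a - p|$. Next I would parametrize $k = a_1 + \cdots + a_j + a$ with $0 \leq a < a_{j+1}$ and, using formula~\eqref{trans} (or a direct induction on $k$ from the mediant construction), identify the $k$-th Farey pair as
\[
\left\{ \tfrac{p_j}{q_j},\; \tfrac{p_{j-1} + a\,p_j}{q_{j-1} + a\,q_j} \right\},
\]
where $p_i/q_i$ denotes the $i$-th convergent of $\a$.

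The key computation is then to invoke the recursion $p_{j+1} = a_{j+1}p_j + p_{j-1}$, $q_{j+1} = a_{j+1}q_j + q_{j-1}$, together with the sign alternation of $q_i\a - p_i$, to obtain
\[
\bigl|(q_{j-1} + a\, q_j)\a - (p_{j-1} + a\, p_j)\bigr| = (a_{j+1} - a)|q_j\a - p_j| + |q_{j+1}\a - p_{j+1}|.
\]
For $0 \leq a \leq a_{j+1} - 1$ this exceeds $|q_j\a - p_j|$, so $p_j/q_j$ supplies the first row of $B_k$ and
\[
|\a_{k,1}| = \frac{1}{(a_{j+1} - a) + \theta_j}, \qquad \theta_j := \frac{|q_{j+1}\a - p_{j+1}|}{|q_j\a - p_j|} \in (0, 1).
\]

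From this the lemma falls out: within the $(j+1)$-st block the minimum of $|\a_{k,1}|$ is attained at $a = 0$ and equals $1/(a_{j+1} + \theta_j) \in \bigl(1/(a_{j+1}+1),\, 1/a_{j+1}\bigr)$. Hence $\inf_k |\a_{k,1}| > 0$ if and only if $\sup_j a_j < \infty$, which is the claim. The step requiring the most care is the Farey-pair identification and the sign bookkeeping that justifies the closed form for the denominator; once that is in place, the conclusion is pure arithmetic on the convergent recursion.
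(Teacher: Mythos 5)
Your proof is correct, and while it travels through the same gateway as the paper (Theorem~\ref{mchc} plus the block parametrization $k = a_1 + \cdots + a_j + a$, ultimately resting on formula~\eqref{trans}), the execution is genuinely different and, in my view, cleaner. The paper packages the block structure as a matrix recursion $B_k = M B_{k-b}$ with $M$ one of four matrices and then computes $\a_{k,1}$ as a linear-fractional function of $\a_{k-b,1}$, carrying out one of the four cases and asserting the others are similar; your route instead identifies the $k$-th Farey pair directly as $\{p_j/q_j,\ (p_{j-1}+a p_j)/(q_{j-1}+a q_j)\}$ and obtains the closed form $|\a_{k,1}| = \bigl((a_{j+1}-a) + \theta_j\bigr)^{-1}$ with $\theta_j = |q_{j+1}\a-p_{j+1}|/|q_j\a-p_j| \in (0,1)$. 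The key identity $\bigl|(q_{j-1}+a q_j)\a-(p_{j-1}+a p_j)\bigr| = (a_{j+1}-a)|q_j\a-p_j| + |q_{j+1}\a-p_{j+1}|$ follows, as you say, from $\eta_{j-1}+a\eta_j = \eta_{j+1}-(a_{j+1}-a)\eta_j$ (writing $\eta_i = q_i\a - p_i$) and the opposite signs of $\eta_j, \eta_{j+1}$; this also confirms, uniformly in $a$, that $p_j/q_j$ supplies the first row, a point the paper handles only implicitly through its case split. Your explicit formula buys a sharper statement: the infimum of $|\a_{k,1}|$ over the $(j{+}1)$-st block lies in $\bigl(1/(a_{j+1}+1),\,1/a_{j+1}\bigr)$, from which the equivalence is immediate in both directions, and it sidesteps the four-case bookkeeping entirely. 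The one thing left to the reader in your sketch, the Farey-pair identification from~\eqref{trans} (which implicitly needs the convention $p_{-1}/q_{-1}=1/0$ when $j=0$), is the same standard fact the paper also invokes without proof, so no ground is lost there.
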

\begin{proof}
If $M=L^b=\pmatrix10b1$ then
\begin{equation*}
	\alpha_{k,1} = \frac{q_k\alpha-p_k}{q_k'\alpha-p_k'} = \frac{q_{k-b}\alpha-p_{k-b}}{(q_{k-b}\alpha-p_{k-b})+b(q_{k-b}'\alpha-p_{k-b}')} = \frac{1}{1+b\alpha_{k-b,1}}.
\end{equation*}
The other three cases are similar.
In each case we see that $|\alpha_{k,1}|$ is bounded below if and only if $\sup\{a_j\}$ is finite.
\end{proof}

\begin{remark}
	The original paper by Hurwitz \cite{Hur} is still a good reference for the Hurwitz chain.
A modern reference is  \cite{Phi}, which also details  its relation to semi-regular continued fractions.  The dynamical properties
of the Hurwitz chain are discussed in \cite{Lag}, where it is called the additive continued fraction.
\end{remark}

\section{Successive minima}\label{mp}

In this section we will give what is essentially  Minkowski's proof of Theorem \ref{mac}.
A crucial ingredient is his theorem on successive minima in 
the geometry of numbers. 

For a general norm $F$ on $\R^\ell$ and any full lattice $\L\subset \R^\ell$  let 
\[
\mu_1\leq \mu_2\leq \cdots\leq \mu_\ell
\]
be the successive minima of $\L$ with respect to $F$. 
This means that  $\mu_j$ is the infimum over all $\mu>0$ such that there are $j$ linearly independent points $v\in \L$ with $F(v)\leq \mu.$
There exist (not necessarily unique) minimizing vectors 
$w_1,\dots,w_\ell\in \L$, which means that they are  linearly independent 
and satisfy $F(w_j)=\mu_j$ for $j=1,\dots,\ell.$ Note that $\{w_1,\dots,w_\ell\}$ do not  necessarily form a $\Z$-basis for $\L$.

The following fundamental result was first proved in \cite[Kap.~V]{Mink2}.
Shorter proofs were given by Davenport \cite{Dav1} and Weyl \cite{We}.  See also \cite{Cas}.
\begin{theorem*}[Minkowski's Theorem on Successive Minima]
Suppose that $\L$ has determinant one. Then
\[
\mathrm{vol}(\B) \mu_1\cdots \mu_\ell\leq 2^\ell,
\]
where $\mathrm{vol}(\B)$ is the volume of $\B=\{x\in \R^\ell; F(x)<1\}$.
\end{theorem*}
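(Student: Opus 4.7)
The plan is to derive the inequality $\mu_1\mu_2\cdots \mu_\ell\,\mathrm{vol}(\mathcal{B})\leq 2^\ell$ from Minkowski's First Convex Body Theorem, combined with a careful use of a set of linearly independent minimizing vectors. First I would pick $w_1,\dots,w_\ell\in\Lambda$ with $F(w_j)=\mu_j$; existence follows from a compactness argument once one knows each $\mu_j$ is a genuine minimum (not merely an infimum), which in turn is a consequence of the discreteness of $\Lambda$ and the continuity of $F$.

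As a warm-up, note that the symmetric convex body $\mu_1\mathcal{B}$ contains no nonzero lattice point in its interior, since any such point would contradict the definition of $\mu_1$. Applying the First Convex Body Theorem to any slight enlargement of $\mu_1\mathcal{B}$ therefore yields $\mu_1^\ell\,\mathrm{vol}(\mathcal{B})\leq 2^\ell$, which is exactly the claim when $\mu_1=\cdots=\mu_\ell$.

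For the sharp product bound, the main idea is to \emph{decouple} the $\ell$ directions $V_j=\mathrm{span}_{\R}(w_1,\dots,w_j)$ and invoke the First Theorem in each of them. Specifically, I would consider the pushforward of $\mathcal{B}$ to the successive quotients $\R^\ell/V_{j-1}$ and verify that the first successive minimum of the projected lattice, with respect to the projected body, is at least $\mu_j$. Combining the $\ell$ resulting estimates via a Fubini-type factorization of $\mathrm{vol}(\mathcal{B})$ along the flag $V_0\subset V_1\subset\cdots\subset V_\ell$ then recovers the product $\mu_1\cdots\mu_\ell\,\mathrm{vol}(\mathcal{B})\leq 2^\ell$. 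An equivalent route, following Davenport, applies the linear transformation $T$ defined by $T(w_j)=\mu_j e_j$: this normalizes the minima of the new lattice against the new body so that a single application of the First Theorem finishes the argument.

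The main obstacle is that $w_1,\dots,w_\ell$ need not form a $\Z$-basis for $\Lambda$, so the sublattice $\langle w_1,\dots,w_j\rangle_{\Z}$ can sit inside $\Lambda\cap V_j$ with finite index greater than one; moreover a lattice vector $v\in\Lambda$ whose image in $\R^\ell/V_{j-1}$ has small quotient norm need not admit a lift to $\Lambda$ of comparably small $F$-norm. Keeping the quotient norms and covolumes aligned with $\det(\Lambda)=1$ is the delicate point, and convexity and central symmetry of $\mathcal{B}$ must be used throughout to ensure that each projected body remains symmetric and convex, so that the First Theorem continues to apply in the quotient.
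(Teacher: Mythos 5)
The paper does not actually prove this statement: it is an unnumbered theorem (so it falls outside the authors' pledge to prove ``all numbered theorems''), and the surrounding text simply cites Minkowski's Geometrie der Zahlen, Davenport, Weyl, and Cassels for the proof, remarking that a weaker constant such as $2^\ell\ell!$ would suffice for the paper's purposes. So there is no internal proof to compare against; I will judge your sketch on its own.

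Your warm-up (applying the first convex-body theorem to $\mu_1\mathcal{B}$) is correct, but the central step of the projection route has a genuine gap. You assert that the first minimum of the projected lattice $\pi_j(\Lambda)$ with respect to the projected body $\pi_j(\mathcal{B})$ is at least $\mu_j$. This is false in general, because the quotient norm of $\pi_j(v)$ for $v\in\Lambda\setminus V_{j-1}$ equals the $F$-distance from $v$ to the real subspace $V_{j-1}$, and that distance can be strictly smaller than $F(v)$ even when $F(v)\ge\mu_j$. A concrete example in $\R^2$ with $F$ the sup-norm: take $\Lambda$ generated by $(1,0)$ and $(\tfrac12,\tfrac35)$. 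One checks $\mu_1=\mu_2=\tfrac35$ with $w_1=(\tfrac12,\tfrac35)$, yet the $F$-distance from $(1,0)$ to $\R w_1$ is $\inf_t\max\bigl(|1-\tfrac{t}{2}|,\tfrac{3|t|}{5}\bigr)=\tfrac{6}{11}<\tfrac35$, so the projected first minimum is strictly less than $\mu_2$. Rounding the $V_{j-1}$-component to a lattice point, the way one does in reduction theory (compare Lemma~\ref{lred}), only yields the weaker estimate $\mu_j\le t+\tfrac12(\mu_1+\cdots+\mu_{j-1})$, which degrades the final constant to something of the order $2^\ell(\tfrac32)^{O(\ell^2)}$ rather than the sharp $2^\ell$.

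The ``equivalent route'' you attribute to Davenport is also not an alternative as written. A linear map $T$ with $T(w_j)=\mu_j e_j$ carries the pair $(\Lambda,\mathcal{B})$ to $(T\Lambda,T\mathcal{B})$ while leaving all the (dimensionless) successive minima unchanged, so a single application of the first theorem afterwards gains nothing. Davenport's actual proof is a stage-by-stage deformation: starting from $\mu_1\mathcal{B}$, at stage $j$ one stretches by $\mu_{j+1}/\mu_j$ in the direction transverse to $V_j$, and the heart of the argument is a nonlinear verification, using convexity, central symmetry, and the minimality defining $\mu_{j+1}$, that the stretched body still contains no nonzero lattice point in its interior. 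That verification (or Weyl's variant of it) is precisely the missing idea in your sketch, and without it neither the projection route nor the ``normalizing'' linear map delivers the sharp bound $\mathrm{vol}(\mathcal{B})\,\mu_1\cdots\mu_\ell\le 2^\ell$.
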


We remark that for the proof of Theorem \ref{mac} we can get by with a weaker result that replaces $2^\ell$ by a larger constant. In fact,  Minkowski gives a proof of this result in his paper with the constant $2^\ell \ell!$ in place of $2^\ell.$ See also  \cite[Cor.~2B p.~88]{Sch3} for a proof with the constant $2^\ell  \ell^{\frac{\ell}{2}}$, which is based on the case of an ellipsoid and a theorem of Jordan.

For $r=(q_1,\dots,q_\ell)\in \Z^{\ell}$ define
\begin{equation}\label{xi}
\xi(r)\defeq q_\ell+L_\a(q_1,\dots,q_n).
\end{equation}
For $m\in \Z^+$ recall the integral $\ell \times \ell $ matrix $A_m=(a_{i,j})$  defined above. From (\ref{be}) we have for each $i=1,\dots,\ell$ that
 \begin{equation}\label{beta1}
 \b_i=\xi(a_{i,1},\dots,a_{i,\ell}).
 \end{equation}
\begin{lemma}\label{minu} Fix $m \in \Z^+$ and 
suppose that $r_1,\dots,r_\ell\in \Z^\ell$ are linearly independent and satisfy $\|r_i\|_\infty \leq m$ for $i=1,\dots,\ell.$
Let them be ordered so that 
\begin{equation}\label{l3e}
|\xi(r_1)|\leq |\xi(r_2)|\leq \cdots\leq |\xi(r_\ell)|.
\end{equation}
Then for each $i=1,\dots \ell$ we have that
\[
|\b_i| \leq |\xi(r_i)|.
\]
\end{lemma}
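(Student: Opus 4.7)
The plan is to prove Lemma \ref{minu} by a short dimensional argument that directly exploits the greedy, row-by-row construction of $A_m$. First I would unpack the construction from the paragraph preceding (\ref{be}) into the following usable characterization: if $a_1,\dots,a_\ell$ denote the rows of $A_m$, then for each $i$ one has
\[
|\b_i|=\min\bigl\{|\xi(r)|:r\in\Z^\ell,\ \|r\|_\infty\leq m,\ r\not\in\mathrm{span}(a_1,\dots,a_{i-1})\bigr\},
\]
where the case $i=1$ is read with the empty span, so that the condition becomes $r\neq 0$. Extracting this characterization is really the heart of the argument, since the paper's definition is phrased iteratively: the first row is chosen to minimize $|\xi|$, then the second to minimize $|\xi|$ subject to being independent of the first, and so on. Once this is pinned down, the sign convention plays no role because only $|\b_i|$ enters.

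With this in hand, I would fix $i$ and compare the ordered family $r_1,\dots,r_\ell$ of the lemma against $a_1,\dots,a_{i-1}$. The vectors $r_1,\dots,r_i$ are linearly independent and therefore span an $i$-dimensional subspace of $\R^\ell$, while $a_1,\dots,a_{i-1}$ span a subspace of dimension at most $i-1$. Hence there must exist some index $j\leq i$ for which $r_j\notin\mathrm{span}(a_1,\dots,a_{i-1})$. Since $r_j\in\Z^\ell$ with $\|r_j\|_\infty\leq m$, the displayed characterization of $|\b_i|$ gives $|\b_i|\leq|\xi(r_j)|$, and the hypothesized ordering (\ref{l3e}) combined with $j\leq i$ yields $|\xi(r_j)|\leq|\xi(r_i)|$. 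Chaining these two inequalities produces the desired $|\b_i|\leq|\xi(r_i)|$.

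The main obstacle, as indicated above, is confirming the minimality characterization of $\b_i$: one needs to check that the iterative definition of $A_m$ via $\A_{m,1}\supset\A_{m,2}\supset\cdots$ and the sign normalization really is equivalent to picking, for each $i$, a vector of minimum $|\xi|$ among those lying outside $\mathrm{span}(a_1,\dots,a_{i-1})$. This is essentially a matter of unwinding definitions and noting that the nonsingularity constraint on the full matrix translates stepwise into the linear independence of successive rows. Once that bookkeeping is settled, the pigeonhole on dimensions finishes the proof immediately, with no appeal to Minkowski's theorem on successive minima (which enters only later in this section).
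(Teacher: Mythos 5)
Your proposal is correct and uses essentially the same argument as the paper: both rely on the greedy minimality characterization of $|\b_i|$ (minimum of $|\xi|$ over integer vectors of sup-norm at most $m$ lying outside $\mathrm{span}(a_1,\dots,a_{i-1})$) together with a dimensional pigeonhole. The paper phrases the pigeonhole by working backward from $i=\ell$, while you fix $i$ and directly extract some $r_j$ with $j\leq i$ outside the span; these are the same idea, with your presentation arguably a touch more direct.
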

\begin{proof}
For a fixed $m \in \Z^+$ let $w_j=(a_{i,1},\dots a_{i,\ell})$ denote the $i^{th}$ row of $A_m$, which is the integral vector produced by the Minkowski algorithm.
Thus for $j=1,\dots, \ell$, we know that $|\b_j|$ gives the smallest value of $|\xi(w)|$ for any $w\in \Z^\ell$ with $\|w\|_\infty \leq m$  that is linearly independent of $\{w_1,\dots,w_{j-1}\}.$

Note that at least $\ell-1$ of the $\{r_1,\dots,r_\ell\}$ are independent of $w_1$ and so each of those $r_k$ satisfies $|\xi(r_k)|\geq |\b_2|.$
At least $\ell-2$ of the $r_k$ are independent of $\{w_1,w_2\}$ and so these $r_k$ satisfy $|\xi(r_k)|\geq |\b_3|.$
Continue this process until we have at least one $r_k$ that satisfies $|\xi(r_k)|\geq |\b_\ell|.$
By (\ref{l3e}) we know that this last set of $r's$ must contain $r_\ell$ and so  $|\xi(r_\ell)|\geq |\b_\ell|$. Working backward we can finish the proof.
\end{proof}

\begin{lemma}\label{min} Fix $m \in \Z^+$ and let $A_m=(a_{i,j})$ and $\b_1,\dots, \b_\ell$  be from the Minkowski algorithm.
Let $\L=\Z^\ell$ and  define the norm on $\R^\ell$ by
\begin{equation*}
G_m(x_1,\dots,x_\ell)=\max\big(|x_1|,\dots,|x_\ell|,\tfrac{m}{|\b_\ell|}|L_\a(x_1,\dots,x_n)+x_{\ell}|\big).
\end{equation*}
Let $\mu_1\leq \mu_2\leq\cdots\leq \mu_\ell$ be the successive minima of $G_m$. Then
\begin{equation}\label{mu1}
\mu_\ell\geq m.
\end{equation}
\end{lemma}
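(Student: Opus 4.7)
The plan is to argue by contradiction, converting a hypothetical violation of $\mu_\ell \geq m$ into $\ell$ linearly independent integer vectors that are ``too good'' at approximating in the sense measured by $\xi$, and then invoke Lemma \ref{minu} to derive a contradiction.

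First, suppose $\mu_\ell < m$. By definition of the successive minima, there exist linearly independent lattice vectors $r_1,\dots,r_\ell \in \Z^\ell$ with $G_m(r_i) < m$ for each $i$ (pick them realising $G_m(r_i) \leq \mu_i + \varepsilon$ for $\varepsilon$ small enough that $\mu_\ell + \varepsilon < m$). Unpacking the definition of the norm $G_m$, this immediately gives two consequences: $\|r_i\|_\infty < m$, hence $\|r_i\|_\infty \leq m$ since $r_i \in \Z^\ell$; and $\tfrac{m}{|\b_\ell|}|\xi(r_i)| < m$, i.e.\ $|\xi(r_i)| < |\b_\ell|$, where I use the fact (from \eqref{xi} and \eqref{beta1}) that the $\xi$-value of an integer vector $(x_1,\dots,x_\ell)$ is precisely $L_\a(x_1,\dots,x_n) + x_\ell$.

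Next, reorder $r_1,\dots,r_\ell$ so that $|\xi(r_1)| \leq |\xi(r_2)| \leq \cdots \leq |\xi(r_\ell)|$. Then the hypotheses of Lemma \ref{minu} are met, and that lemma yields $|\b_i| \leq |\xi(r_i)|$ for every $i$; in particular, $|\b_\ell| \leq |\xi(r_\ell)|$. But $|\xi(r_\ell)| < |\b_\ell|$ from the previous paragraph, so we reach the contradiction $|\b_\ell| < |\b_\ell|$.

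Therefore no such system $r_1,\dots,r_\ell$ exists, which forces $\mu_\ell \geq m$. The only substantive step is the reduction to Lemma \ref{minu}; the rest is unpacking the definition of $G_m$ and noting that integrality of the $r_i$ turns the strict inequality $\|r_i\|_\infty < m$ into the hypothesis $\|r_i\|_\infty \leq m$ required by that lemma. I expect no serious obstacle, since the norm $G_m$ has been custom-designed so that the $G_m$-bound decouples into exactly the two constraints (size bound and $\xi$-smallness) that Lemma \ref{minu} controls.
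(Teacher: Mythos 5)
Your proof is correct and takes essentially the same approach as the paper's: apply Lemma \ref{minu} to a linearly independent system realizing (or nearly realizing) the successive minima, and read off $\mu_\ell \geq m$ from the explicit form of $G_m$. The only cosmetic difference is that you argue by contradiction with an $\varepsilon$-approximation to the infima, whereas the paper argues directly using exact minimizing vectors (which it has already noted always exist).
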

\begin{proof}
Note that for
$r=(q_1,\dots,q_n,p)\in \Z^\ell$ we have
\begin{equation}\label{normd}
G_m(r)=\max\big(|q_1|,\dots,|q_n|,|p|,\tfrac{|\xi(r)|m}{|\b_\ell|}\big)
\end{equation}
where $\xi(r)$ was defined in (\ref{xi}).
Suppose that
$\{r_1,\dots,r_\ell\}$ are independent and such that for each $j$ we have
$
G_m(r_j)=\mu_j.
$ By (\ref{normd}) we see that if $\|r_j\|_\infty >m $ for any $j=1,\dots,\ell$  then $\mu_\ell >m.$
Otherwise apply Lemma \ref{minu} to $\{r_1,\dots,r_\ell\}$ to conclude that
$|\xi(r_j)|\geq |\b_j|$ for each $j=1,\dots, \ell.$ Therefore in particular for $j=\ell$ we get by (\ref{normd}) again that $\mu_\ell=G_m(r_\ell)\geq m.$
Thus in any case we have (\ref{mu1}).
\end{proof}

 Minkowski only proved the following result for $L_{(\a,\dots,\a^n)}$ where $\a $ is algebraic of degree $\ell$,
but his proof extends naturally.

\begin{lemma}\label{baib}
Suppose that $\a=(\a_1,\dots,\a_n)\in \R^n.$
If $L_\a$ is badly approximable then there are constants $c,C>0$ depending only on $\a$ such that
\begin{equation}\label{inn}
cm^{-n}<|\b_1|<\cdots<|\b_\ell|<Cm^{-n}.
\end{equation}

\end{lemma}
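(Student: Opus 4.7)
The plan is to prove the outer bounds separately, with the strict chain $|\b_1|<\cdots<|\b_\ell|$ a byproduct of the Minkowski algorithm's sequential choice together with the $\Q$-linear independence of $\{\a_1,\ldots,\a_n,1\}$.

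For the lower bound $|\b_1|>cm^{-n}$, I would first apply Dirichlet's theorem to produce a nonzero $(q,p)\in\Z^\ell$ with $\|q\|_\infty\leq m$ and $|L_\a(q)+p|\leq m^{-n}$, so by the minimization defining $\b_1$ we get $|\b_1|\leq m^{-n}$. Once $m$ is large enough that $|\b_1|<1$, the first $n$ entries $q_1$ of the row $r_1$ of $A_m$ cannot all vanish (else $|\b_1|=|a_{1,\ell}|\geq 1$), and bad approximability then gives $|\b_1|=|L_\a(q_1)+p_1|\geq \|L_\a(q_1)\|\geq c\|q_1\|_\infty^{-n}\geq cm^{-n}$.

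The upper bound $|\b_\ell|<Cm^{-n}$ is the substantive step. The plan is to apply Minkowski's theorem on successive minima to the norm $G_m$ and lattice $\Z^\ell$ of Lemma \ref{min}, with unit ball $\mathcal{B}$. Three inputs are needed: (a) Lemma \ref{min} provides $\mu_\ell\geq m$; (b) a direct geometric estimate gives $\mathrm{vol}(\mathcal{B})\geq c_1|\b_\ell|/m$, obtained by restricting $(x_1,\ldots,x_n)$ to a fixed small box on which $|L_\a(x_1,\ldots,x_n)|<\half$, so that the $x_\ell$-constraint cuts out an interval of length $2|\b_\ell|/m$; (c) bad approximability forces $\mu_j\geq c_2(m/|\b_\ell|)^{1/\ell}$ for every $j$. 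For (c), observe that for any nonzero $r=(q,p)\in\Z^\ell$ with $q\neq 0$,
\[
G_m(r)\ \geq\ \max\bigl(\|q\|_\infty,\ cm\|q\|_\infty^{-n}/|\b_\ell|\bigr),
\]
whose minimum in $\|q\|_\infty>0$ is $(cm/|\b_\ell|)^{1/\ell}$; while for $q=0$ one has $G_m(r)\geq m/|\b_\ell|$, which exceeds $(cm/|\b_\ell|)^{1/\ell}$ once $m$ is large. Plugging everything into $\mu_1\cdots\mu_\ell\leq 2^\ell/\mathrm{vol}(\mathcal{B})$ yields
\[
m\cdot\bigl[c_2(m/|\b_\ell|)^{1/\ell}\bigr]^{\ell-1}\ \leq\ \frac{2^\ell m}{c_1|\b_\ell|},
\]
and rearranging using $\ell-1=n$ gives $|\b_\ell|\leq Cm^{-n}$ for a constant $C=C(\a)$. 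The intermediate inequalities then follow from $|\b_1|\leq|\b_j|\leq|\b_\ell|$, with strictness: if $|\b_j|=|\b_{j+1}|$ then $\xi(r_j\pm r_{j+1})=0$ for some sign, a $\Q$-linear relation among $\{\a_1,\ldots,\a_n,1\}$ violating the hypothesis.

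The main obstacle is ingredient (c): one must establish a uniform lower bound on every successive minimum $\mu_j$ in terms of the quantity $|\b_\ell|$ one is trying to control, and the fact that bad approximability is the right hypothesis to make this work is precisely what makes the bound $m^{-n}$ achievable rather than something weaker. Everything else is lattice-point bookkeeping and the standard volume computation for $\mathcal{B}$.
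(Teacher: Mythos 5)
Your proof is correct and follows essentially the same approach as the paper: the lower bound on $|\b_1|$ comes directly from bad approximability, and the upper bound on $|\b_\ell|$ comes from Minkowski's theorem on successive minima applied to $G_m$, combined with $\mu_\ell\geq m$ from Lemma~\ref{min}, the volume estimate $\mathrm{vol}(\B)\gg|\b_\ell|/m$, and a bad-approximability lower bound on the remaining minima. The only cosmetic difference is that you bound every $\mu_j$ directly by $(cm/|\b_\ell|)^{1/\ell}$, whereas the paper establishes this for $\mu_1$ alone (inequality~(\ref{50})) and then invokes $\mu_1\leq\mu_j$ for $j<\ell$ --- the two bookkeepings are equivalent.
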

\begin{proof}

In this proof and those that follow we usually name and keep track of constants that depend only on $\a$, even
though it would be cleaner to use the  $\ll$ or $\gg$ notation.
 We do this to help the reader  verify inequalities.

Fix $m \in \Z^+$ and let $A_m=(a_{i,j})$ and $\b_1,\dots, \b_\ell$  be from the Minkowski algorithm.
Note that we suppress in the notation the dependence of $\b_j$ on $m.$
Let now $\L=\Z^\ell$ and $G_m$ the norm on $\R^\ell$ in Lemma \ref{min}.
The form $L_\a$ being badly approximable means that there is a $c>0$ so that
\begin{equation}\label{ba3}
|\xi(r)|>c\|q\|_\infty^{-n}
\end{equation}
for all $r=(q_1,\dots,q_n,p)\in \Z^\ell.$
By the definition of $\b_1$ and (\ref{ba3}) we have that
\begin{equation}\label{lower}
|\b_1|=\min_{\|r\|_\infty\leq m}|\xi(r)|> \tfrac{c}{m^n}.
\end{equation}
Now $G_m(r_1)=\mu_1$ and so (\ref{normd}) implies that 
\begin{equation}\label{47}
|\xi(r_1)|\leq \tfrac{\mu_1|\b_\ell|}{m}
\end{equation}
and also that $\|r_1\|_\infty \leq \mu_1$.
Thus by (\ref{ba3}) again we also have that 
\begin{equation}\label{48}
|\xi(r_1)|\geq \tfrac{c}{\mu_1^n}.
\end{equation}
By (\ref{47}) and (\ref{48}) we conclude that 
\begin{equation}\label{50}
\Big(\frac{\mu_1^\ell|\b_\ell|}{m}\Big)^n\geq c^n,
\end{equation}
which is the form we will need.

A straightforward calculation shows that 
\[
\mathrm{vol}(\{x\in \R^\ell; G_m(x)<1\})\geq V\tfrac{|\b_\ell|}{m},
\]
where $V>0$ is a constant depending only on $\a$.
By Minkowski's theorem on successive minima we have
\[
V\tfrac{|\b_\ell|}{m}\mu_1^n\mu_\ell \leq 2^\ell
\]
so that using (\ref{50}) 
we have for $M=2^{\ell^2}V^{-\ell}$  that
\[
 c^n\tfrac{|\b_\ell|}{m}\mu_\ell^\ell\leq \Big(\tfrac{|\b_\ell|}{m}\mu_1^n\mu_\ell\Big)^\ell\leq M.
\]
Thus 
$ |\b_\ell|\leq \frac{Cm}{\mu_\ell^\ell}$ for $C=\frac{M}{c^n}.$
Finally, from Lemma \ref{min} we derive that $|\b_\ell|\leq \frac{C}{m^n}.$
Together with (\ref{lower}), this finishes the proof of Lemma \ref{baib}.
\end{proof}

\subsection*{Proof of Theorem \ref{mac}}

The proof of the implication {\it algebraic implies finite} works the same for the Minkowski chain for
 any $\Q$-basis  $\{\a_1,\a_2,\dots,\a_n,1\}$ of a real number field $K$.
Recall that for each $k$ we have \[(\b_1,\dots,\b_\ell)=B_k(\a_1,\dots,\a_n,1),\] where again we suppress the dependence of $\b_j$ on $k$ in the notation.
Clearly $\b_j\in K$  and there is a positive integer $b$ such that $b\b_j$ is an algebraic integer for any $j,k,$ so we must have $N_{K/\Q}(b\b_j)\geq1.$
 Denote by $\{\b_i^{(j)}; j=1,\dots, \ell\}$ the set of Galois conjugates of $\b_i=\b_i^{(1)}$.
Set $C_1=\max_{j=1,\dots,\ell}(1+|\a_1^{(j)}|+\cdots+|\a_n^{(j)}|)$.  Clearly 
\begin{equation}\label{n2}|
\b_i^{(j)}|\leq C_1 m,
\end{equation} 
where $m=m_k$ from the algorithm.

We know that $L_{(\a_1,\dots,\a_n)}$ is badly approximable so by Lemma \ref{baib} we have that for each $i$
\begin{equation}\label{bew}
|\b_i |\leq C m^{-n}.
\end{equation}
Therefore we have that
\begin{equation}\label{norm}
b^{-\ell}\leq |N_{K/\Q}(\b_i)|=|\prod_{j=1}^\ell \b_i^{(j)}|  \leq C\,C_1^n.
\end{equation}

From the first inequality of (\ref{norm}), (\ref{n2}) and (\ref{bew}) we get that for $k>1$
\begin{equation}\label{bew2}
|\b_i^{(k)}|\geq C_2 m
\end{equation}
for some constant $C_2>0$ depending only on $\a$.
Here we have used (\ref{bew}) for the first factor in the product and  (\ref{n2}) for all of the  remaining factors except for the $k^{th}$.
Recall from (\ref{alph}) that
\[
(\a_{k,1},\dots ,\a_{k,n})=(\tfrac{\b_1}{\b_\ell},\dots, \tfrac{\b_n}{\b_\ell})\in K^n.
\]
Let $\g_{k,i}=b^\ell N_{K/\Q}(\b_\ell)\a_{k,i}$. Then $\g_{k,i}$ is an algebraic integer in $K$.
From (\ref{inn}),  (\ref{n2}), (\ref{norm}) and (\ref{bew2})  we have  for each $i,j,k$ that $|\g_{k,i}^{(j)}|\leq C_3$ for some $C_3>0$ that depends only on $\a$.
It follows that there are only finitely many such $(\g_{k,1},\dots ,\g_{k,n})$ hence only finitely many
$(\a_{k,1},\dots ,\a_{k,n}).$

For the converse, we need to  assume that $(\a_1,\a_2,\dots,\a_n)=(\a^n,\dots,\a)$ and suppose that there are only finitely many values  of the sequence $\{B_k(\a^n,\dots,\a)\}_{k\geq 1}$.
Then for some $k'>k$  we have $B_k(\a^n,\dots,\a)=B_{k'}(\a^n,\dots,\a)$.
Hence
\begin{equation*}
B_k^{-1}B_{k'}(\a^n,\dots,\a)=(\a^n,\dots,\a),
\end{equation*}
which implies that
 \begin{equation}\label{row}
 B_k^{-1}B_{k'}(\a^n,\dots,\a,1)^\top=\theta (\a^n,\dots,\a,1)^\top\end{equation}
for some $\theta\in \R$ with $|\theta|\leq 1.$
Write $B_k^{-1}B_{k'}=(c_{i,j})$ so that for each $k=1,\dots,n$ we can write two successive rows of (\ref{row}) as
\begin{align*}
c_{k,1}\a^n+\cdots + (c_{k,k}-\theta)\a^{n-k+1}+\cdots+c_{k,n}\a+c_{k,\ell}&=0\\
c_{k+1,1}\a^n +\cdots+ (c_{k+1,k+1}-\theta) \a^{n-k}+\cdots+c_{k+1,n}\a+c_{k+1,\ell}&=0.
\end{align*}
Multiply the first equation by $\a$ and subtract rows to get for each $k=1,\dots,n$ that
\[
c_{k,1}\a^\ell+(c_{k,2}-c_{k+1,1})\a^n+\cdots+(c_{k,k}-c_{k+1,k+1})\a^{n-k}+\cdots+(c_{k,\ell}-c_{k+1,n})\a- c_{k+1,\ell}=0.
\]
Unless all of these vanish identically we see that $\a$ is algebraic of degree $\ell$, upon
using that we are assuming that $\{\a^n,\dots,\a,1\}$ are linearly independent over $\Q$.
If they all vanish it follows easily that $c_{i,j}=\d_{i,j}c$ for some $c\in \Q$.
Thus $c=\theta$ and $B_{k'}=\theta B_{k}$ with $|\theta|<1$, which contradicts that $k'>k.$
\qed

\section{Reduced bases}\label{redu}

In this section we present several well-known results from the theory of reduced bases that we need.
Perhaps the best reference for this material   is a set of unpublished notes from a seminar given at IAS 
in 1949 \cite{WSM}.  Because these notes  might not be readily available we have included  here all proofs.
Another reference is \cite{GL}.

Let $\L\subset \R^\ell$ be a full lattice and $F$ a norm on $\R^\ell.$
The lattice points in $\L$ taking on the successive minima on $F$ are linearly independent but do not necessarily form a basis 
for $\L$. Minkowski's second theorem implies, with some extra work, a substitute that bounds the 
product of values of $F$ of the elements of a reduced basis.
In case $F$ is a positive definite quadratic form the theory was developed by Minkowski \cite{Mink1.6}.

Suppose that $\{v_1,\dots,v_\ell\}$ is an ordered  $\Z$-basis for $\L$.
Define  for $k=1,\dots, \ell$
\begin{equation}\label{Rk}
R_k=\{a_1v_1+\cdots +a_\ell v_\ell; \;a_j\in \Z\;\; \text{with}\;\;\gcd (a_k,a_{k+1},\dots ,a_\ell)=1\} \subset\L.
\end{equation}
Note that  $v_j\notin R_k$ for $j<k.$

In general, an (ordered)  $\Z$-basis $\{v_1,\dots,v_\ell\}$ for $\L$ is {\it reduced} with respect to $F$ if for each $k=1,\dots ,\ell$ we have that for all $v\in R_k$ 
\[
F(v)\geq F(v_k).
\]
It follows that if $\{v_1,\dots,v_\ell\}$  is reduced with respect to $F$ and $\l_k=F(v_k)$ then
\[
\l_1\leq \l_2\leq \cdots\leq \l_\ell.
\]

\begin{lemma}\label{rede}
For any norm $F$ and full lattice $\L\subset \R^\ell$ reduced bases $\{v_1,\dots,v_\ell\}$ exist.
\end{lemma}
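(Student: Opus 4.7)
I would construct a reduced basis greedily, producing $v_k$ one at a time so that at each stage $\{v_1,\dots,v_k\}$ can be extended to a $\Z$-basis of $\L$. To start, take $v_1$ to be a minimizer of $F$ over $\L\setminus\{0\}$. Such a minimizer exists because $F$ is a norm, so the ball $\{x\in\R^\ell:F(x)\le M\}$ is compact, hence meets the discrete set $\L$ in only finitely many points. Moreover $v_1$ is automatically primitive: if $v_1=dw$ with $w\in\L$ and $d\ge 2$, then $F(w)=F(v_1)/d$ contradicts minimality. Consequently $\langle v_1\rangle$ is a pure sublattice and $v_1$ extends to a $\Z$-basis of $\L$.

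Suppose inductively that $v_1,\dots,v_{k-1}$ have been constructed and extend to a $\Z$-basis of $\L$. Let
\[
S_k=\{v\in\L:\{v_1,\dots,v_{k-1},v\}\text{ extends to a }\Z\text{-basis of }\L\}.
\]
Any completion of $v_1,\dots,v_{k-1}$ provides an element of $S_k$, so $S_k\ne\emptyset$, and by the same compactness/discreteness argument as above, $F$ attains its minimum on $S_k$. Choose $v_k$ to be such a minimizer. By definition $\{v_1,\dots,v_k\}$ extends to a basis of $\L$, carrying the induction forward until we have the full system $\{v_1,\dots,v_\ell\}$.

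It remains to verify that this greedy construction yields a reduced basis, i.e.\ that $F(v)\ge F(v_k)$ for every $v\in R_k$. The key observation is that $R_k\subseteq S_k$. Indeed, given $v=a_1v_1+\cdots+a_\ell v_\ell\in R_k$ with $\gcd(a_k,\dots,a_\ell)=1$, pass to the quotient $\bar\L=\L/\langle v_1,\dots,v_{k-1}\rangle$, which is free abelian with basis $\bar v_k,\dots,\bar v_\ell$ because $v_1,\dots,v_{k-1}$ generate a pure sublattice. The image $\bar v=a_k\bar v_k+\cdots+a_\ell\bar v_\ell$ is primitive in $\bar\L$ by the gcd hypothesis, so it extends to a $\Z$-basis $\bar v,\bar w_{k+1},\dots,\bar w_\ell$ of $\bar\L$. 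Lifting the $\bar w_j$ to $w_j\in\L$ produces a $\Z$-basis $v_1,\dots,v_{k-1},v,w_{k+1},\dots,w_\ell$ of $\L$, showing $v\in S_k$. The minimality of $v_k$ on $S_k$ then gives $F(v)\ge F(v_k)$, as required.

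The main obstacle is the interplay between the $F$-minimization and the arithmetic condition that defines $R_k$; the bridge is the purity/primitivity argument in the quotient lattice, which identifies the combinatorial set $R_k$ with the geometric set $S_k$ of completable vectors. Once this identification is made, both the existence of each minimizer and the reducedness of the resulting basis follow directly.
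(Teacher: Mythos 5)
Your proof is correct and follows essentially the same strategy as the paper: greedily choose each $v_k$ to minimize $F$ subject to $\{v_1,\dots,v_k\}$ being extendable to a $\Z$-basis, then verify reducedness by showing that every element of $R_k$ was among the candidates for $v_k$. The paper phrases both steps in the language of columns of matrices in $\GL(\ell,\Z)$ (using an explicit block-triangular matrix $A'$ to exhibit the completion), while you use the intrinsic language of pure sublattices, quotients, and primitive vectors; these are the same argument in different dress.
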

\begin{proof}
The beginning of the proof is similar to the construction of the $A$ in Minkowski's algorithm except that now we demand that 
$A\in \GL(\ell,\Z)$. 
Let $\{u_1,\dots,u_\ell\}$ be any $\Z$-basis for $\L$.
Let $A=(r_1,\dots,r_\ell)\in \GL(\ell,\Z)$ where $r_i$ is a column vector.
Choose $r_1$ so that 
\[
\l_1=F(v_1)=F\big((u_1,\dots,u_\ell)r_1\big)
\]
is minimal. Note that this exists by convexity. Now choose $a_2$  to minimize $\l_2=F(v_2)=F((u_1,\dots,u_\ell) r_2).$  Thus $\l_1\leq \l_2.$ Continue this process to determine $A$ and the basis $\{v_1,\dots,v_\ell\}$ where for $\l_k=F(v_k)$ we have that $\l_1\leq \l_2\leq \cdots\leq \l_\ell.$
We want to show that $\{v_1,\dots,v_\ell\}$ is  reduced.

Fix $k$ with $1\leq k\leq \ell.$
By construction if $s$ is any column of a matrix in $\GL(\ell,\Z)$ that is linearly independent of $\{r_1,\dots,r_{k-1}\}$
then
\begin{equation}\label{li}
F\big((u_1,\dots,u_\ell)s\big)\geq F\big((u_1,\dots,u_\ell)r_k\big).
\end{equation}
Let $q^\top=(q_1,\dots,q_\ell)\in \Z^\ell$ be any integral vector such that $\gcd(q_k,\dots,q_{\ell})=1.$
Fix a matrix of the form
\[
A'=\pMatrix{I}{B}{0}{C}\in \GL(\ell,\Z)
\]
where $I$ is the $(k-1)\times (k-1)$ identity matrix and where the $k^{th}$ column of $A'$ is $q$. This is possible by our assumption on $q$. 
Clearly the first $k-1$ columns of $AA' $ coincide with those of $A$.
Hence if $s$ is the $k^{th}$ column of $AA'$ then by (\ref{li})
\[
F(v_k)=F\big((u_1,\dots,u_\ell)r_k\big)\leq F\big((u_1,\dots,u_\ell)s\big)=F\big((u_1,\dots,u_\ell)Aq_k\big)=F\big((v_1,\dots,v_\ell)q_k\big).
\]
It follows that $\{v_1,\dots,v_\ell\}$ is reduced.
\end{proof}
The statement of Part (i) of the following lemma is given in  \cite[Lemma 2 p.~100]{Sie} with a different proof
than the one we give below.
Our proof is adapted from the proof of Part (ii) given in \cite{WSM}.
Part (i) is crucial for
 our proofs of Theorems \ref{t3}, \ref{t4} and \ref{t1}.

\begin{lemma}\label{lred}
Let $F:\R^\ell\rightarrow [0,\infty)$ be a norm and $\L\subset \R^\ell$ be a full lattice.
Suppose that $v_1,\dots,v_{\ell}$ is a reduced basis for $\L$ with respect to $F$
 so that for $\l_i=F(v_i)$ 
\[
\l_1\leq \l_2\leq \cdots \leq \l_\ell.
\]
\begin{enumerate}[label=(\roman*)]
\item If
$u_1,\dots,u_\ell\in \L$ is any linearly independent set in $\L$  ordered so that for $\nu_j=F(u_j)$
\[
\nu_1\leq \nu_2\leq \cdots \leq \nu_\ell
\]
then for each $k=1,\dots, \ell$ we have that
$
\l_k\leq (\tfrac{3}{2})^{k-1}\nu_k.
$
\item If
$w_1,\dots,w_\ell\in \L$ are minimizing vectors in  $\L$  with successive minima $\mu_j=F(w_j)$
\[
\mu_1\leq \mu_2\leq \cdots \leq \mu_\ell
\]
then $\l_1=\mu_1$ and  for each $k=2,\dots, \ell$ we have that
$
\l_k\leq (\tfrac{3}{2})^{k-2}\mu_k.
$
\end{enumerate}
\end{lemma}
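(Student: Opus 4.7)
The plan is to prove both parts by induction on $k$, using the reformulation that $R_k$ equals the set of $w \in \L$ whose image under the projection $\pi\colon \L \to \L/\L_{k-1}$ is primitive, where $\L_{k-1} = \langle v_1, \ldots, v_{k-1}\rangle_\Z$. Under this reformulation the reduced basis condition reads $\l_k = \min\{F(w) : w \in \L,\ \pi(w) \text{ primitive in } \L/\L_{k-1}\}$. The base case $k=1$ of part (i) is handled by writing $u_1 = d u_1'$ with $d \geq 1$ an integer and $u_1' \in \L$ primitive: then $u_1' \in R_1$ and $\l_1 \leq F(u_1') \leq F(u_1) = \nu_1$.

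For the inductive step of part (i), since $u_1, \ldots, u_k$ are $\R$-linearly independent while $\L_{k-1,\R}$ has dimension $k-1$, at least one $j \leq k$ satisfies $\pi(u_j) \neq 0$. Writing $u_j = \sum_i a_{i,j} v_i$ and setting $d = \gcd(a_{k,j}, \ldots, a_{\ell,j}) \geq 1$, I would form $w_0 = \sum_{i \geq k}(a_{i,j}/d) v_i \in \L$ (which has primitive image in $\L/\L_{k-1}$), then choose integers $z_i$ with $|a_{i,j} - d z_i| \leq d/2$ and set $w = w_0 + \sum_{i<k} z_i v_i \in R_k$. Using the identity $dw = u_j - \sum_{i<k}(a_{i,j} - d z_i) v_i$, the triangle inequality, the inductive hypothesis $\l_i \leq (3/2)^{i-1}\nu_i$, and $\nu_j \leq \nu_k$, one obtains
\[
F(w) \leq \tfrac{1}{d} F(u_j) + \tfrac{1}{2}\sum_{i=1}^{k-1}\l_i \leq \nu_k + \tfrac{1}{2}\sum_{i=1}^{k-1}(\tfrac{3}{2})^{i-1}\nu_k = (\tfrac{3}{2})^{k-1}\nu_k,
\]
and hence $\l_k \leq F(w) \leq (3/2)^{k-1}\nu_k$.

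For part (ii), the equality $\l_1 = \mu_1$ follows because any $F$-shortest nonzero lattice vector must be primitive (otherwise dividing by a nontrivial common factor would produce a strictly shorter vector). The sharper bound $\l_k \leq (3/2)^{k-2}\mu_k$ for $k \geq 2$ comes from rerunning the argument with $\nu_j = \mu_j$ and case-splitting on $d$: when $d=1$ the vector $w_j$ itself lies in $R_k$, giving $\l_k \leq \mu_j \leq \mu_k$; when $d \geq 2$ the improved estimate $F(u_j)/d = \mu_j/d \leq \mu_k/2$, together with the improved base $\l_1 = \mu_1$, saves exactly one factor of $3/2$ in the geometric sum and yields $F(w) \leq (3/2)^{k-2}\mu_k$. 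The main obstacle is the careful bookkeeping of the geometric series $\tfrac{1}{2}\sum_{i=0}^{k-2}(\tfrac{3}{2})^i = (\tfrac{3}{2})^{k-1} - 1$ and verifying that the simultaneous coefficient reductions $|a_{i,j} - d z_i| \leq d/2$ for all $i < k$ preserve the primitivity of $\pi(w)$, which they do since adjustments by elements of $\L_{k-1}$ do not alter the image in the quotient.
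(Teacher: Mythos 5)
Your argument is correct and follows essentially the same route as the paper: pick a $u_j$ (or $w_j$) with $j\leq k$ whose coordinates in positions $k,\dots,\ell$ are not all zero, divide by the gcd $d$, adjust the first $k-1$ coordinates modulo $d$ to lie in $[-d/2,d/2]$, and close the induction with the geometric series $\tfrac12\sum_{i=0}^{k-2}(\tfrac32)^i=(\tfrac32)^{k-1}-1$. Your write-up of part (ii) is in fact more explicit than the paper's (which only says ``similar''), and you correctly identify that the two savings — $\l_1=\mu_1$ in the base and $\mu_j/d\leq\mu_k/2$ in the $d\geq2$ case — combine to drop exactly one power of $\tfrac32$.
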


\begin{proof}
(i).
 There are $a_{i,j}\in \Z$ such that for each $i$
\[
u_i=\sum_{1\leq j \leq \ell}a_{i,j}v_j.
\]
Fix $k$ with $1\leq k \leq \ell.$
Since $\{u_1,\dots,u_\ell\}$ are linearly independent there is a $j$ with $1\leq j\leq k$ so that
\[
a_{j,k}, a_{j,k+1},\dots, a_{j,\ell}
\]
are not all zero. Thus for any such $j$ let $d=\gcd(a_{j,k}, a_{j,k+1},\dots, a_{j,\ell})>0.$

If $d=1$ then $u_j\in R_k$ and hence 
\[
\l_k\leq F(u_j)=\nu_j\leq \nu_k.
\]
If $d>1$ 
define for $m=1,\dots,k-1$ the integer $r_m$ with $|r_m|\leq \tfrac{d}{2}$ so that
\[
a_{j,m}+r_m\equiv 0\pmod{d}.
\]
Then
$
y=\tfrac{1}{d}(v_j+r_1 v_1+\cdots+r_{k-1}v_{k-1})\in R_k.
$
Hence we have 
\begin{align*}
\l_k\leq& F(y)\leq \tfrac{1}{d}F(u_j)+\tfrac{r_1}{d}\l_1+\cdots +\tfrac{r_{k-1}}{d} \l_{k-1}\\
\leq& \tfrac{\nu_k}{2}+\tfrac{1}{2}(\l_1+\cdots+\l_{k-1}).
\end{align*}
Therefore in any case for $k=1,\dots \ell$ we have
\begin{align}\label{sie}
\l_k\leq \nu_k+\tfrac{1}{2}(\l_1+\cdots+\l_{k-1}).
\end{align}
Suppose now that for $j=1,\dots, k-1$ 
\[
\l_j\leq (\tfrac{3}{2})^{j-1}\nu_j.
\]
Then by (\ref{sie}) we deduce that
\[
\l_k\leq \nu_k+\half \big((\tfrac{3}{2})^0+(\tfrac{3}{2})^1+\cdots+(\tfrac{3}{2})^{k-2}\big)\nu_k =(\tfrac{3}{2})^{k-1}\nu_k.
\]
Since $\l_1\leq \nu_1$ the result (i)  follows by induction.

The proof of (ii) is similar except that we use the fact that $\l_1=\mu_1$.
\end{proof}

The following result was found independently by Mahler \cite{Mah1} and Weyl \cite{We}.
\begin{theorem}[First Finiteness Theorem]\label{fft}
Let $\L\subset \R^\ell$ be a full lattice with determinant 1. 
  For a reduced basis $\{v_1,\dots,v_\ell\}$ with $\l_k=F(v_k)$ we have
\begin{equation}\label{ffi}
\tfrac{2^\ell}{\ell!}\leq \mathrm{vol}(\B)\,\l_1\cdots\l_\ell\leq 2^\ell (\tfrac{3}{2})^{\frac{(\ell-1)(\ell-2)}{2}},
\end{equation}
where $\mathrm{vol}(\B)$ is the volume of $\B=\{x\in \R^\ell; F(x)<1\}$.
\end{theorem}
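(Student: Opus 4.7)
The plan is to handle the two inequalities separately; the upper bound follows by combining Lemma \ref{lred}(ii) with Minkowski's theorem on successive minima, while the lower bound is a direct inscribed-body argument using the reduced basis itself.

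For the upper bound, I would invoke Lemma \ref{lred}(ii) to compare the reduced-basis norms $\l_k$ with the successive minima $\mu_k$: we have $\l_1 = \mu_1$ and $\l_k \leq (\tfrac{3}{2})^{k-2} \mu_k$ for $k = 2, \dots, \ell$. Taking the product,
\[
\l_1 \cdots \l_\ell \;\leq\; (\tfrac{3}{2})^{\,0+1+\cdots+(\ell-2)}\, \mu_1\cdots\mu_\ell \;=\; (\tfrac{3}{2})^{(\ell-1)(\ell-2)/2}\, \mu_1\cdots\mu_\ell.
\]
Multiplying by $\mathrm{vol}(\B)$ and applying Minkowski's theorem on successive minima (which gives $\mathrm{vol}(\B)\mu_1\cdots\mu_\ell \leq 2^\ell$ since $\det \L = 1$) yields the claimed upper bound.

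For the lower bound, I would exhibit an explicit symmetric convex body inside $\B$ constructed from the reduced basis. Define the cross-polytope
\[
D = \Big\{\sum_{i=1}^\ell t_i \tfrac{v_i}{\l_i} \,:\, t_i \in \R,\ \sum_{i=1}^\ell |t_i| \leq 1\Big\}.
\]
For any $x \in D$ the triangle inequality for the norm $F$ gives $F(x) \leq \sum_i |t_i|\, F(v_i)/\l_i = \sum_i |t_i| \leq 1$, so $D \subset \B$. A standard computation shows that a cross-polytope with vertices $\pm u_1, \dots, \pm u_\ell$ has volume $\tfrac{2^\ell}{\ell!} |\det(u_1,\dots,u_\ell)|$, hence
\[
\mathrm{vol}(D) \;=\; \tfrac{2^\ell}{\ell!} \cdot \frac{|\det(v_1,\dots,v_\ell)|}{\l_1\cdots \l_\ell} \;=\; \tfrac{2^\ell}{\ell!\, \l_1\cdots\l_\ell},
\]
since $\{v_1,\dots,v_\ell\}$ is a $\Z$-basis of the unit-determinant lattice $\L$. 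The inclusion $D \subset \B$ then gives $\mathrm{vol}(\B) \l_1\cdots\l_\ell \geq \tfrac{2^\ell}{\ell!}$.

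Neither step presents a serious obstacle: the upper bound is bookkeeping built on top of Lemma \ref{lred}(ii) and the (already-invoked) Minkowski successive minima bound, and the lower bound requires only verifying the inscribed cross-polytope $D \subset \B$ and the elementary volume formula. The only mild subtlety is remembering that the lower bound uses no special property of the reduced basis beyond its being a basis and the triangle inequality — so the cleanest presentation is the cross-polytope argument above, which simultaneously explains why the constant $\tfrac{2^\ell}{\ell!}$ is the natural one appearing here.
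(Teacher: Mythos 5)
Your proof is correct and follows essentially the same route as the paper: the lower bound comes from the inscribed cross-polytope (the paper calls it an octahedron) with vertices $\pm v_i/\l_i$, and the upper bound is the product of the bounds from Lemma \ref{lred}(ii) combined with Minkowski's theorem on successive minima. Your write-up simply spells out the volume computation for the cross-polytope a bit more explicitly than the paper does.
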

\begin{proof}
The first inequality is a consequence of the fact that the closure of $\B$ contains the octahedron with vertices at the points 
\[
\{\pm\tfrac{v_1}{\l_1},\dots, \pm\tfrac{v_\ell}{\l_\ell}\}
\]
and this octahedron has volume $\frac{2^\ell}{\ell! \l_1\cdots\l_\ell},$ which is easily found by computing the determinant of the linear transformation that maps the $k^{th}$ standard unit vector to $\tfrac{v_k}{\l_k}$ for each $k.$

The second inequality is an immediate consequence of (ii) of Lemma \ref{lred}  and Minkowski's Second Theorem.
\end{proof}
We remark that we could also apply (i) of Lemma \ref{lred} to get the second inequality in (\ref{ffi}) with the right hand side multiplied by $\frac{3}{2}$, which would be sufficient for our purposes.

\section{Criteria for badly approximable and singular forms}\label{pf}
In this section we will prove Theorems \ref{t3}, \ref{liou}  and \ref{t4}.
We make use of the lattice $\L_t(\a)\subset \R^{\ell}$
 defined in terms of $\a$  for a fixed parameter $t>0$ by 
\begin{equation}\label{lamb}
\L_t=\L_t(\a)=(t^{-1},0,\dots,0,\a_1t^n)\Z+\cdots+(0,0,\dots,t^{-1},\a_nt^n)\Z+(0,0,\dots,0,t^n)\Z.
\end{equation}
Clearly $\det(\L_t)=1.$
Consider the norm on $\R^\ell$ given by
\begin{equation}\label{sup}
F_\infty(x_1,\dots,x_n,y)=\|( x_1,\dots,x_n,y)\|_\infty.
\end{equation}

The next lemma follows as  a special case from results of \cite{Da}.
For convenience we give the proof here, which for our case is quite simple.
\begin{lemma}\label{l1}
For the lattice $\L_t(\a)$ defined above let \[\l_1(t)=\min_{\substack{v\in \L_t(\a)\\v\neq 0}}F_\infty(v).\]
 \begin{enumerate}[label=(\roman*)]
\item The form $L_\a$ is badly approximable if and only if there is a $c>0$ depending only on $\a$ such that
$\l_1(t)>c$
for all $t\geq 1$.
\item The form $L_\a$ is singular if and only if 
$\l_1(t)\rightarrow 0$ as $t \rightarrow \infty.$
\end{enumerate}
\end{lemma}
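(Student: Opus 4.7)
The plan is to first unpack what the vectors of $\L_t$ look like, and then translate the statement about $\l_1(t)$ into the Diophantine language of $\|q\|_\infty$ and $\|L_\a(q)\|$; after that both (i) and (ii) become elementary rescaling exercises.

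A general element of $\L_t = \L_t(\a)$ has the form $v = (q_1/t,\ldots,q_n/t,(L_\a(q)+p)t^n)$ for $(q,p)\in\Z^n\times \Z$. Hence
\[
F_\infty(v)=\max\bigl(\|q\|_\infty/t,\ |L_\a(q)+p|\,t^n\bigr),
\]
and for fixed $q$ the best choice of $p$ is a nearest integer to $-L_\a(q)$, yielding $|L_\a(q)+p|=\|L_\a(q)\|$. The case $q=0,p\ne 0$ gives $F_\infty(v)=|p|t^n\ge t^n\ge 1$ for $t\ge 1$, so for $t\ge 1$
\[
\l_1(t)=\min\Bigl(\,t^n,\ \min_{0\ne q\in\Z^n}\max\bigl(\|q\|_\infty/t,\ \|L_\a(q)\|\,t^n\bigr)\Bigr),
\]
and for the purposes of the lemma the second minimum is what matters.

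For (i), the key elementary inequality is $\max(u,v)^{n+1}\ge u^n v$ for $u,v\ge 0$. If $L_\a$ is badly approximable with constant $c_0$, then for every $q\ne 0$ and every $t\ge 1$ we get $\max(\|q\|_\infty/t,\|L_\a(q)\|t^n)^{n+1}\ge (\|q\|_\infty/t)^n\cdot\|L_\a(q)\|t^n=\|q\|_\infty^n\|L_\a(q)\|\ge c_0$, so $\l_1(t)\ge c_0^{1/(n+1)}$. Conversely, given any $q\ne 0$, set $t=(\|q\|_\infty/\|L_\a(q)\|)^{1/(n+1)}$; since $\|L_\a(q)\|\le\tfrac12$ and $\|q\|_\infty\ge 1$ we have $t\ge 1$, and both terms of the max equal $\|q\|_\infty^{n/(n+1)}\|L_\a(q)\|^{1/(n+1)}$. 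If $\l_1(t)>c$ for all $t\ge 1$, this common value exceeds $c$, i.e.\ $\|L_\a(q)\|>c^{n+1}\|q\|_\infty^{-n}$, so $L_\a$ is badly approximable.

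For (ii), the correspondence between the two scales is $Q=\delta t$ with $\delta=\ep^{1/(n+1)}$. Assume $L_\a$ is singular and fix $\delta\in(0,1)$. Applying singularity with $\ep=\delta^{n+1}$ yields, for every $Q\ge Q_\ep$, some $0\ne q$ with $\|q\|_\infty\le Q=\delta t$ and $\|L_\a(q)\|\le\ep Q^{-n}=\delta^{n+1}/(\delta t)^n=\delta\,t^{-n}$; hence $\l_1(t)\le\delta$ for $t\ge Q_\ep/\delta$, giving $\l_1(t)\to 0$. Conversely, suppose $\l_1(t)\to 0$ and fix $\ep\in(0,1)$. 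Set $\delta=\ep^{1/(n+1)}$ and choose $T$ with $\l_1(t)\le\delta$ for $t\ge T$. Given $Q\ge T\ep^{1/(n+1)}$, take $t=Q/\delta\ge T$; the minimizing $v\in\L_t$ has $F_\infty(v)\le\delta<1$, which (as noted above) forces $q\ne 0$, and the two coordinate bounds read $\|q\|_\infty\le\delta t=Q$ and $\|L_\a(q)\|\le\delta t^{-n}=\delta^{n+1}Q^{-n}=\ep Q^{-n}$, which is precisely the singularity condition.

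The arguments are essentially mechanical; the only place one must be a little careful is the $q=0$ case, which is handled by the trivial bound $F_\infty(v)\ge t^n\ge 1$ for $t\ge 1$ and $\delta<1$. No deep input is required beyond the rescaling that relates the lattice parameter $t$ to the Dirichlet parameter $Q$.
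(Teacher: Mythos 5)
Your proof is correct and follows essentially the same strategy as the paper: unpack a nonzero lattice vector of $\L_t(\a)$ as $\bigl(q/t,\,(L_\a(q)+p)t^n\bigr)$, dispose of the $q=0$ case using $t\ge 1$, and translate between bounds on $\l_1(t)$ and Diophantine bounds by choosing $t$ appropriately for each $q$ (or $Q$). Two small refinements worth noting: your one-line inequality $\max(u,v)^{n+1}\ge u^n v$ neatly replaces the paper's case split on whether $\|q\|_\infty/t\le c$, and you spell out the converse of (ii), which the paper leaves to the reader; the only tiny gap is that your choice $t=(\|q\|_\infty/\|L_\a(q)\|)^{1/(n+1)}$ tacitly assumes $\|L_\a(q)\|>0$, which is harmless since if $\|L_\a(q)\|=0$ for some nonzero $q$ then $\l_1(t)\to 0$ trivially and $L_\a$ is not badly approximable.
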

\begin{proof}

Part (i):
First suppose that $\a$ is badly approximable, so that (\ref{ba}) holds with some $c_\a>0$.
Fix $t\geq 1$ and $v=(x_1,\dots,x_n,y)\in \L_t(\a)$. If $x_1,\dots, x_n=0$ then $yt^{-n} $ is a non-zero integer so $F_\infty(v)\geq 1.$
Thus suppose that $x=(x_1,\dots,x_n)\neq 0$, set $c=c_\a^{\frac{1}{\ell}}$ and $\mu=\|x\|_\infty>0.$

If $\mu\leq c$ then
$
t^n\|L_\a(tx)\|>c,
$
so that $F_\infty(v)>c.$
If $\mu>c$ then  again
$F_\infty(v)>c.$ 
In either case it follows that $\l_1(t)>c$.

For the converse assertion, suppose that $\l_1(t)>c$.  Fix non-zero  $q=(q_1,\dots,q_n)\in \Z^n$.
Assume that $c<1.$ Next choose $t=c^{-1}\|q\|_\infty$ so that $t>1$ and $t^{-1}\|q\|_\infty=c.$
For any integer $p$ we have
$(t^{-1}q_1,\dots,t^{-1}q_n,t^n(p+L_\a(q)))\in \L_t(\a)$
 hence
$
t^n|L_\a(q)+p|>c,
$
which implies that \[\|L_\a(q)\|>c^{\ell}\|q\|^{-n}_\infty,\] so $L_\a$ is badly approximable.

\bigskip\noindent
Part (ii):
Suppose that $L_\a$ is singular and $\ep>0$ is fixed.    For sufficiently large $t$ 
there exists $q\in \Z^n$ 
and $p\in \Z$ such that  $\|q\|_\infty\leq t \ep^{\frac{1}{\ell}}$ and
\[
|p+L_\a(q)|\leq t^{-n}\ep^{1-\frac{n}{\ell}}= t^{-n}\ep^{\frac{1}{\ell}}.
\]
Let
$v=(t^{-1}q_1,\dots, t^{-1}q_n,t^n(p+L_\a(q)))\in \L_t(\a);$
 for sufficiently large $t$ we have $F_\infty(v)\leq \ep^{\frac{1}{\ell}},$ proving that $\l_1(t)\rightarrow 0$ as $t \rightarrow \infty.$

The converse is similar and is left to the reader.
\end{proof}

\subsection*{Proof of  Theorem \ref{t3} }

We have shown in Lemma \ref{baib} that if $L_\a$ is badly approximable then 
\[
\frac{|\b_1|}{|\b_\ell|} \geq \frac {c}{C}.
\]

Now suppose that $L_\a$ is not badly approximable. By (i) of Lemma \ref{l1}, for any $\ep>0$ 
there exists some $t\geq 1$ and $v \in \L_t(\a)$ so that $F(v)<\ep$
where again 
\[
F(v)=F_\infty(v)=\|v\|_\infty.
\]
Let $\{v_1,v_2,\dots,v_\ell\}$ be a reduced basis for $\L_t(\a)$ with respect to $F$
and such that for $\l_i=F(v_i)$ we have
$\l_1\leq \cdots \leq \l_\ell.$

Now $v_1=(t^{-1}q_{1},\dots,t^{-1}q_{n}, t^n\xi(r))$ for some non-zero $r=(q_{1},\dots,q_{n},p)\in \Z^\ell, $ where $\xi(r)$ was defined in (\ref{xi}).
Clearly we have
\begin{equation}\label{Fv1}
\l_1=F(v_1)\leq F(v)<\ep
\end{equation}
so from the definition of $F$
\begin{equation}\label{Ft}
t^{-1}|q_j|<\ep \;\;\;\text{for $j=1,\dots,n$ and}\;\;\; t^n|\xi(r)| <\ep.
\end{equation}

Next set $m = \lceil \kappa t\ep \rceil$, where $\kappa$ is a constant depending only on $\a$ chosen to be large enough so that $\max(|q_1|,\dots,|q_n|,|p|) \leq m$, which is possible by (\ref{Ft}).
For $A_m=(a_{i,j})$ from Minkowski's algorithm let for each $i=1,\dots, \ell$
\begin{equation}\label{nor}
u_i=(t^{-1}a_{i,1},\dots,t^{-1}a_{i,n}, t^n\b_i)\in \L_t(\a).
\end{equation}
By the definition of $\b_1$ and (\ref{Ft}) again we have that
\begin{equation}\label{in1}
|\b_1|\leq |\xi(r)|<t^{-n}\ep.
\end{equation}

By construction
\begin{equation}\label{beta}
t^n|\b_1|<t^n|\b_2|<\cdots<t^n|\b_\ell|,
\end{equation}
but we do not know that necessarily 
\[
F(u_1)\leq F(u_2)\leq \cdots\leq F(u_\ell).
\]
Let $k\in \{1,\dots,\ell\}$ be such that $F(u_k)=\max( F(u_1),\dots,F(u_\ell)).$
Since $\{u_1,\dots,u_\ell\}$ are linearly independent in $\L_t(\a)$,
by (i) of Lemma \ref{lred} we have that 
 \begin{equation}\label{Fv2}
 F(u_k) \geq (\tfrac{2}{3})^{n}F(v_\ell).
 \end{equation}
 By the first inequality of the  First Finiteness Theorem and (\ref{Fv1}) we see that
$   F(v_\ell) >(\tfrac{1}{\ell ! \ep})^{1/n} $
 and therefore by (\ref{Fv2}) 
 \begin{equation}\label{Fv3}
  F(u_k)  > (\tfrac{2}{3})^n(\tfrac{1}{\ell ! \ep})^{1/n}.
\end{equation}
Now \[\max(t^{-1}|a_{k,1}|,\dots,t^{-1}|a_{k,n}|)\leq \tfrac{m}{t}\leq \kappa \ep+t^{-1}<(\tfrac{2}{3})^n(\tfrac{1}{\ell ! \ep})^{1/n}\]
for $\ep>0$ sufficiently small.
Hence by this, (\ref{Fv3})  and (\ref{beta}) we have
\begin{equation} \label{eq:b-ell-lower}
|\b_\ell| \geq|\b_k|> t^{-n}(\tfrac{2}{3})^n(\tfrac{1}{\ell ! \ep})^{1/n},
\end{equation}
after referring again to (\ref{nor}).
By (\ref{in1}) we conclude that for sufficiently small $\ep$ 
\[
|\tfrac{\b_1}{\b_\ell}|< (\tfrac{3}{2})^n (\ell ! \ep)^{1/n}\ep.
\]
It follows that if $L_\a$ is not badly approximable then $|\tfrac{\b_1}{\b_\ell}|$ can be made arbitrarily small
for some $m$, hence $|\a_{k,1}|$ can be made arbitrarily small for some $k$.
\qed

\subsection*{Proof of Theorem \ref{liou}}

Our proof is adapted from an argument given in \S 3 of \cite{DS1}. 
By the definition of a Liouville number, there are infinitely many $q\in \Z^+$ and $p\in \Z$ prime to $q$ such that
\[
|\a-\tfrac{p}{q}|<q^{-n-2}.
\]
For $x\in \Z^n$ with $0<\|x\|_{\infty}\leq Q=q-1$ we have
\begin{align*}
L_{(\a^n,\dots,\a)}(x)-y=&x_1(\tfrac{p}{q})^n+x_2(\tfrac{p}{q})^{n-1}+\cdots+ x_n\tfrac{p}{q}-y+O(q^{-n-1})\\
=&\frac{x_1p^n+x_2 p^{n-1}q+\cdots+ x_npq^{n-1}-yq^n}{q^n}+O(q^{-n-1}).
\end{align*}
Now $x_1p^n+x_2 p^{n-1}q+\cdots+ x_npq^{n-1}-yq^n$ is a non-zero integer since $q\nmid x_1$.   Thus
for any $\ep>0$ there are arbitrarily large $Q$ so that
\[
\|L_{(\a^n,\dots,\a)}(x)\|\geq (1-\ep)Q^{-n}
\]
for all $x\in \Z^n$ with $0<\|x\|_{\infty}\leq Q$.
It follows that Dirichlet's theorem cannot be improved for $L_{(\a^n,\dots,\a)}$, so by the theorem of Davenport and Schmidt,
$L_{(\a^n,\dots,\a)}$ cannot be badly approximable. \qed

\subsection*{Proof of  Theorem \ref{t4} }

 Suppose that $\alpha$ is singular and let $\ep\in (0,1)$.
Then there exists a $t_0$ such that 
\begin{equation*}
	\lambda_1(t)<\ep \quad \text{ for all } t\geq t_0.
\end{equation*}
This is the analogue of \eqref{Fv1} but now the inequality holds for all sufficiently large $t$.
Let $m$ be any positive integer greater than $t_0$ and let $t=m/\ep \geq t_0$.
If $A_m=(a_{i,j})$ is the $m$-th matrix from Minkowski's algorithm, then by following the argument between \eqref{nor} and \eqref{eq:b-ell-lower} above we find that 
\begin{equation*}
	|\beta_1|<\frac{\ep}{t^n} \quad \text{and} \quad |\beta_\ell|>\frac{c}{t^{n}\ep^{1/n}}
\end{equation*}
for some $c>0$.
It follows that $\left|\frac{\beta_1}{\beta_\ell}\right|$ can be made arbitrarily small for all sufficiently large $m$, hence $|\alpha_{k,1}|$ can be made arbitrarily small for all sufficiently large $k$.

Conversely, suppose that $\alpha$ is not singular.
Then there exists a $c>0$ and a sequence $\{Q_j\}$ tending to infinity such that for each $j$ there are infinitely many $q\in \Z^n$ with 
\begin{equation*}
	\|q\|_{\infty}\leq Q_j \quad \text{ and } \|L_\alpha(q)\|\geq cQ_j^{-n}.
\end{equation*}
Fix one of these $Q_j$ and let $m=Q_j$.
Then
\begin{equation*}
	\beta_1 = \min_{\substack{q\in \Z^n\setminus\{0\} \\ \|q\|_\infty \leq m}} |\xi(q)| \geq \frac{c}{m^n}.
\end{equation*}
This is analogous to \eqref{lower} but now the lower bound only holds for a sequence of $m$ tending to infinity.
Note that \eqref{mu1} is true for these $m$, and following \eqref{47}--\eqref{50} we find that $|\b_\ell|\leq \frac{C}{m}$ holds here as well.
It follows that for infinitely many $m$ we have $\left|\frac{\beta_1}{\beta_\ell}\right| \geq c'$ for some $c'>0$ and thus $|\alpha_{k,1}|$ is bounded away from zero for infinitely many $k$. \qed

\section{A variant of Dirichlet's theorem}\label{di}
\subsection*{Proof of  Theorem \ref{t1}}
The proof of Theorem \ref{t1}  makes use of reduced bases of the lattice $\L_t(\a)\subset \R^{\ell}$
defined in (\ref{lamb}) with respect to the sup-norm $F=F_\infty$ defined in (\ref{sup}).

Suppose that $\a$ is badly approximable and 
fix $t\geq 1$. 
 There exists a reduced basis $\{v_1,\dots,v_\ell\}$ of $\L_t(\a)$ with respect to $F$,
where $v_i=(x_{i,1},\dots,x_{i,n},y_i)$ for each $i=1,\dots,\ell.$ Write
$\l_i=F(v_i)$  ordered so that
\[
\l_1\leq \l_2\leq \cdots \leq \l_\ell.
\]
By the First Finiteness Theorem (Theorem \ref{fft})
\begin{equation}\label{first}
\tfrac{1}{\ell!}\leq F(v_1)\cdots F(v_\ell)\leq (\tfrac{3}{2})^{\frac{(\ell-1)(\ell-2)}{2}}.
\end{equation}
By Lemma \ref{l1} (i) we have $F(v_i)> c>0$ for each $i$ so that there is a $c_2>0$ with $F(v_i)<c_2$ for each $i$.
Thus there exists $A=(a_{i,j})\in \GL(\ell,\Z)$ such that $|a_{i,j}|< c_2t$ for $i=1,\dots,\ell$ and $j=1,\dots,n$
and such that 
\begin{equation}\label{Ai}
\| A(\a_1,\dots,\a_n,1)^\top\|_\infty < c_2 \,t^{-n}.
\end{equation}
There exists $c_3>0$ depending on $\a$ so that (\ref{Ai}) holds 
for some $A=(a_{i,j})\in \GL(\ell,\Z)$ with $\|A\|_\infty < c_3 t$.
Taking $t=c_3^{-1}Q$ we deduce the first part of Theorem \ref{t1}.

Conversely, suppose that $L_\a$ is not badly approximable. By Lemma \ref{l1} (i) again, for any $\ep>0$ there is a 
$t\geq 1$ and a $v=(x_1,\dots,x_n, y)\in \L_{t}(\a)$ such that $F(v)<\ep.$
Let $\{v_1,\dots,v_\ell\}$ be a reduced basis for $\L_{t}(\a)$ with respect to $F$.
Suppose that
$u_1,\dots,u_\ell\in \L_{t}$ is a basis for $\L_{t}(\a)$  ordered so that for $\nu_j=F(u_j)$ we have
\[
\nu_1\leq \nu_2\leq \cdots \leq \nu_\ell.
\]
By (i) of Lemma~\ref{lred} 
 for each $k=1,\dots, \ell$ we have that
\[
\l_k\leq (\tfrac{3}{2})^{k-1}\nu_k.
\]
Now 
$\ep >F(v)\geq F(v_1),$
 so by the first inequality of (\ref{first}) we must have that 
$F(v_\ell)\geq (\frac{1}{\ell! \ep})^{1/n}$.
Thus
\[
F(u_\ell)\geq (\tfrac{2}{3})^n(\tfrac{1}{\ell ! \ep})^{1/n}.
\]
Hence given any $c>0$, by choosing $\ep$ small enough, we can find a $Q$ 
such that
\[
\| A(\a_1,\dots,\a_n,1)^\top\|_\infty > c \,Q^{-n}
\]
for all $A\in \GL(\ell,\Z)$ with $\|A\|_\infty <Q.$
This completes the proof of Theorem \ref{t1}.
\qed

\end{document}